\newcommand{\PP}{\mathbb{P}}
\newcommand{\Q}{\mathbb{Q}}
\newcommand{\R}{\mathbb{R}}
\newcommand{\Z}{\mathbb{Z}}
\newcommand{\cP}{\mathcal{P}}
\newcommand{\cC}{\mathscr{C}}
\newcommand{\cX}{\mathscr{X}}
\newcommand{\cL}{\mathcal{L}}
\newcommand{\cF}{\mathcal{F}}
\newcommand{\vv}{\upsilon}
\newcommand{\fP}{\mathfrak{P}}
\newcommand{\OO}{\mathcal{O}}
\DeclareMathOperator{\Disc}{Disc}
\DeclareMathOperator{\re}{Re}
\DeclareMathOperator{\genus}{genus}
\DeclareMathOperator{\divv}{div}
\DeclareMathOperator{\Gal}{Gal}
\DeclareMathOperator{\Norm}{Norm}
\DeclareMathOperator{\ord}{ord}
\renewcommand{\setminus}{-}
\newtheorem{thm}{Theorem}
\newtheorem*{theorem}{Theorem}
\newtheorem{lem}[thm]{Lemma}
\newtheorem{question}[thm]{Question}
\newtheorem{conj}[thm]{Conjecture}
\newtheorem{cor}[thm]{Corollary}
\theoremstyle{definition}
\newtheorem{definition}[equation]{Definition}
\theoremstyle{remark}
\definecolor{darkgreen}{rgb}{0,0.5,0}
\begin{document}

\title[]{
New Algebraic Points on Curves
}

\begin{abstract}
Let $C/\Q$ be a smooth projective
        absolutely irreducible curve of genus $\ge 2$.
	For an extension $L/\Q$, let $C(L)_{\mathrm{new}} \colonequals
	\{P \in C(L) \; : \; \Q(P)=L\}$;
	this is the set of points on $C$ defined over $L$, but
	not over any strictly smaller field.
	Let $n \ge 2$.
        We conjecture that $C(L)_{\mathrm{new}}=\emptyset$ for $100\%$
	of degree $n$ number fields $L$, when ordered by absolute discriminant. 
	For degrees $n=2$, $3$, we give sufficient criteria for our conjecture
	to hold in terms of an explicit model for $C$. For general $n$ we prove
	a theorem that harmonises with the conjecture. 
 	In particular, we verify our conjecture in these cases
	\begin{itemize}
		\item $n=2$; $C=X_0(N)$ for the $18$ values $N \ne 37$
			such that $X_0(N)$ is hyperelliptic.
		\item $n=3$; $C=X_0(23)$, $X_0(29)$, $X_0(31)$, $X_0(64)$. 
	\end{itemize}
	Moreover, we prove the analogue of our conjecture for the unit equation,
	again with $n=3$.
\end{abstract}

\author{Maleeha Khawaja}

\address{Mathematics Institute\\
    University of Warwick\\
    CV4 7AL \\
    United Kingdom}

\email{maleeha.khawaja@warwick.ac.uk}

\author{Samir Siksek}

\address{Mathematics Institute\\
    University of Warwick\\
    CV4 7AL \\
    United Kingdom}

\email{s.siksek@warwick.ac.uk}

\date{\today}

\keywords{hyperbolic curve, Diophantine stability, arithmetic statistics, modular curve}
\subjclass[2020]{11G30, 11G18}

\maketitle

\section{Introduction}
Let $C$ be a smooth projective and absolutely irreducible curve of genus $g$, defined over
a number field $K$.
Let $D$ be a reduced effective divisor
on $C$ and consider the \textbf{punctured curve} $X=C \setminus D$. The
Euler characteristic of $X$ is given by $\chi(X)=2-2g-\deg(D)$. We say
that $X$ is \textbf{hyperbolic} if $\chi(X)<0$. We write $\OO_K$
for the ring of integers of $K$, and we let $\cX/\OO_K$ be
a model for $X$ over $\OO_K$.
The famous Faltings--Siegel Theorem
asserts that $\cX(\OO_K)$ is finite. We mention three special cases.
\begin{itemize}[leftmargin=5mm]
\item If $D=0$ then $X=C$ is a complete curve, which is
hyperbolic if and only if $g \ge 2$. In this case, 
$\cX(\OO_K)=C(K)$ as $X$ is proper, and so Faltings--Siegel
generalises Faltings' theorem.
\item Let $X=\PP^1 \setminus \{0,1,\infty\}$, which
has Euler characteristic $-1$ and is therefore hyperbolic. It can
be shown that $\varepsilon \in \cX(\OO_K)$
if and only if $\varepsilon$ and $1-\varepsilon$ are units.
Thus Faltings--Siegel generalises Siegel's Theorem which states that 
%Siegel's theorem asserting finiteness of the solutions
the \textbf{unit equation} 
\begin{equation}\label{eqn:unit}
	\varepsilon+\delta=1, \qquad \varepsilon,~\delta \in \OO_K^\times
\end{equation}
has finitely many solutions.
\item Let $E/K$ be an elliptic curve specified by an affine Weierstrass model
\[
	y^2+a_1 xy + a_3 y \; = \; x^3+a_2 x^2+a_4 x +a_6, \qquad a_i \in \OO_K
\]
and let $\mathscr{O}$ denote the point at infinity. Let $X=E\setminus \mathscr{O}$,
which has Euler characteristic $-1$.
Then the Faltings--Siegel theorem applied to $\cX$ simply asserts
		that the affine model has finitely many points $(x,y) \in \OO_K^2$;
		this is Siegel's theorem for integral points on elliptic curves.
\end{itemize}
In the above cases, both the curve and the number field
are fixed. This paper is motivated by the following question. 
\begin{question}\label{question:big}
Given
a hyperbolic curve $\cX/K$ and a family $\cL$ of number field extensions $L/K$,
how does the set of integral points $\cX(\OO_L)$ 
vary with the number field?
\end{question}
A typical instance of the above question is 
when $X=C$  
is a complete curve of genus $g \ge 2$, and $\cL$ is the set of all
extensions $L/K$ of a fixed degree $n$. 
This instance of the question is well-considered in the literature;
for an excellent survey we recommend \cite{VirayVogt}, but
merely mention two famous results.
\begin{itemize}[leftmargin=5mm]
\item Merel's celebrated Uniform Boundedness Theorem
\cite{Merel}
asserts that all degree $n$ points on
the modular curve $X_1(p)/\Q$ (with $p$ prime)
are cuspidal, for $n<2 \log_{3}(\sqrt{p}-1)$.
\item A theorem of Harris and Silverman \cite{HS}
asserts that if a (complete) curve $C/\Q$ of genus $\ge 2$
has infinitely many quadratic points then 
$C$ is hyperelliptic or bielliptic.
This theorem builds on a deep theorem
of Faltings asserting that if $A/K$ is an abelian
variety and $V$ is a subvariety not
containing any abelian variety then $V(K)$ is finite.
\end{itemize}

In this paper we offer a conjectural statistical (partial) response to Question~\ref{question:big};
this is Conjecture~\ref{conj:diostab} stated below.
Let $X/K$ be a hyperbolic curve as before,
and let $L/K$ be an extension.
The conjecture is concerned with what we call the set of \textbf{$L$-new points} 
that appear over $L$ on $X/K$; we define these by
\[
\cX(\OO_L)_\mathrm{new}=\{ P \in \cX(\OO_L) \; : \; K(P)=L\}.
\]
We note that the $L$-new points are ones defined over $L$,
but not over any strictly smaller extension of $K$.
Before stating our conjecture, we explain what we mean by a family
$\cL$ of extensions $L/K$. 
\begin{definition}
	Let $K$ be a number field. Let $n \ge 2$.
	Let $G_1,\dotsc,G_r$ be transitive subgroups of $S_n$.
	Let $\vv_1,\dotsc,\vv_s$ be a finite number 
		of places of $K$, and let $L_i/K_{\vv_i}$
		be degree $n$ \'{e}tale algebras.
	By \textbf{the degree $n$ family $\cL$ with data $(G_1,\dotsc,G_r;L_{1},\dotsc,L_{s})$}
	we mean all extensions $L/K$ such that the following two
	conditions hold.
	\begin{itemize}
		\item	Write $\tilde{L}/K$ for the Galois closure 
			of $L/K$. Then
			$\Gal(\tilde{L}/K)$ 
	isomorphic to one of $G_1,\dotsc,G_s$.
	\item
	$L \otimes_{K} K_{\vv_i} \cong L_{i}$ for $i=1,\dotsc,r$.
	\end{itemize}
\end{definition}
We now state our conjecture, which we refer to as the
\textbf{statistical Diophantine stability conjecture for hyperbolic curves}.
\begin{conj}\label{conj:diostab}
Let $K$ be a number field and $\cX/\OO_K$ a model
for a hyperbolic curve. Let $\cL$ be a family of extensions $L/K$.
Then $\cX(\OO_L)_{\mathrm{new}}=\emptyset$ for 100\% of
$L \in \cL$, when ordered by norms of discriminants.
\end{conj}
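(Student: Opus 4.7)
The plan is to employ a ``count and compare'' strategy. Let
\[
N_\cL(X) \;\colonequals\; \#\{L \in \cL : |\Norm_{K/\Q}(\Disc(L/K))| \le X\},
\]
which tends to infinity by Davenport--Heilbronn type asymptotics or Malle-style heuristics. The goal is to bound
\[
N^{\mathrm{new}}_\cL(X) \;\colonequals\; \#\{L \in \cL : \cX(\OO_L)_{\mathrm{new}} \ne \emptyset,~|\Norm(\Disc(L/K))| \le X\}
\]
by $o(N_\cL(X))$ as $X \to \infty$.

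First I would translate $L$-new points into $K$-rational data: each $P \in \cX(\OO_L)_{\mathrm{new}}$, via its Galois orbit, is a closed point of $\cX_K$ of residue degree exactly $n$, equivalently a $K$-rational point of the symmetric power $\Sym^n(\cX)$ whose associated \'{e}tale algebra is the field $L$. Next I would compare discriminant and height: fixing an integral embedding $\cX \hookrightarrow \Aff^m_{\OO_K}$ and a separating coordinate projection, one extracts from $P$ a primitive element $\alpha \in \OO_L$ of $L/K$ whose minimal polynomial has coefficients polynomially bounded by a naive height $h(P)$. Since $\Disc(L/K)$ divides $\Disc(\alpha)$ up to a square, one obtains $|\Norm(\Disc(L/K))| \le C \cdot h(P)^A$, and hence $N^{\mathrm{new}}_\cL(X)$ is controlled by the count of integral degree-$n$ closed points of $\cX$ of height at most $X^{1/A}$.

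The main obstacle is precisely this count. Hyperbolicity of $\cX$ restricts $\Sym^n(\cX)(K)$ via Faltings' theorem on rational points of subvarieties of abelian varieties, but the symmetric power may contain translates of abelian subvarieties --- images under Abel--Jacobi of $g^1_n$'s, or of proper abelian subvarieties of the Jacobian --- along which rational points are Zariski dense and grow polynomially in height. Establishing that the cubic residue fields arising from such components are nevertheless sparse enough to yield density zero requires ideas beyond Faltings, which is why the general conjecture remains open.

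The paper circumvents this obstacle in two illustrative settings. For the unit equation $\cX = \PP^1 \setminus \{0,1,\infty\}$ with $n=3$: a cubic generator $\varepsilon \in \OO_L^\times$ with $1-\varepsilon \in \OO_L^\times$ has minimal polynomial $x^3 + ax^2 + bx + c$ forced by the unit conditions to satisfy $c = -\Norm(\varepsilon) \in \{\pm 1\}$ and $1+a+b+c = \Norm(1-\varepsilon) \in \{\pm 1\}$, reducing the candidate polynomials to an essentially one-parameter family; a careful discriminant-versus-height analysis, combined with the Davenport--Heilbronn estimate $N_\cL(X) = \Theta(X)$, then yields density zero. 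For $X_0(23), X_0(29), X_0(31)$ (genus $2$) and $X_0(64)$ (genus $3$): one determines all cubic closed points explicitly by combining formal immersion arguments, symmetric Chabauty, and a Mordell--Weil sieve on the Jacobian, producing a finite list of cubic residue fields and hence density zero automatically.
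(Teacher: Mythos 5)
This statement is a \emph{conjecture}, not a theorem: the paper offers no proof of it, only evidence in the form of special cases (Theorems~\ref{thm:unit} and~\ref{thm:modular}) plus the supporting structural result Theorem~\ref{thm:sparse}. You correctly recognize this and sensibly pivot from ``proof'' to ``strategy sketch and obstruction,'' which is the right call. Your general framing --- pass to closed points of degree $n$, compare discriminant with height, and observe that the problem is the components of $\Sym^n \cX$ where $K$-points are Zariski dense --- is a legitimate way to think about the conjecture, even if it is not the angle the paper itself dwells on.

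Where you go wrong is in the final paragraph, in your account of how the paper handles its special cases. For $X_0(23)$, $X_0(29)$, $X_0(31)$, $X_0(64)$ you assert that one ``determines all cubic closed points explicitly\ldots producing a finite list of cubic residue fields and hence density zero automatically.'' This is false, and it misses the entire point of the paper. The paper states explicitly that all of these curves have \emph{infinitely} many cubic points. What the proof actually does (detailed for $X_0(64)$) is: compute $J(\Q)$ (which is finite torsion), show via Riemann--Roch that every effective degree-$3$ divisor is linearly equivalent to one of $32$ representatives, observe that $4$ of those representatives have $\ell(D)=2$ and hence give rise to \emph{infinite} one-parameter families $\varphi^{-1}(t)$, $t\in\Q$, of degree-$3$ divisors, and then invoke Theorem~\ref{thm:sparse} (via Corollary~\ref{cor:sparsecubics}) to show that the cubic fields arising from each such $\mathbb{P}^1$-family have density zero among all cubic fields. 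There is no formal immersion, no symmetric Chabauty, and no Mordell--Weil sieve; the density-zero conclusion is not ``automatic'' from finiteness but is the content of the discriminant-sparsity machinery (Theorem~\ref{thm:count}, Lemma~\ref{lem:sparse}, the Bhargava--Taniguchi--Thorne local counts in Corollaries~\ref{cor:BTT1}--\ref{cor:BTT2}, and Corollary~\ref{cor:discriminant2}).

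Your description of the unit-equation case is closer but also imprecise. The paper does not run a raw discriminant-versus-height comparison; it relies on Nagell's classification, which shows exceptional real cubic fields come from two explicit one-parameter families $f_t$ and $g_t$. The $g_t$ family has square polynomial discriminant, hence Galois group $C_3$, hence density $0$ by Cohn and Davenport--Heilbronn. The $f_t$ family is handled by observing that $\Delta_{\OO_{K_t}}$ divides $F(t)=t^4+6t^3+7t^2-6t-31$, so its prime divisors lie in the Frobenian set $S$ of primes modulo which $F$ has a root (density $3/8$ since $\Gal(F)=D_4$), and then applying Corollary~\ref{cor:discriminant}. The mechanism is ``discriminant supported on a sparse set of primes,'' not ``discriminant bounded polynomially in height.'' The distinction matters because the height-based count would need to control collisions and the gap between $\Delta(f_t)$ and $\Delta(\OO_{K_t})$, precisely the issues the paper's prime-support argument sidesteps.
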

If $X=C$ is a smooth projective absolutely irreducible curve over
a number field $K$, and $L/K$ is a finite extension, then 
\[
	\cX(\OO_L)_{\mathrm{new}} \; = \; C(L)_{\mathrm{new}} \; \colonequals
	\{P \in C(L) \; : K(P)=L\}.
\]
Thus Conjecture~\ref{conj:diostab} becomes the following simpler
statement.
\begin{conj}\label{conj:diostab2}
	Let $K$ be a number field and let $C/K$ be a smooth projective
	absolutely irreducible curve of genus $\ge 2$.
	Let $\cL$ be a family of extensions $L/K$.
	Then $C(L)_{\mathrm{new}}=\emptyset$ for $100\%$
	of $L \in \cL$, when ordered by norms of discriminants.
\end{conj}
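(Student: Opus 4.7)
The plan is to reformulate the conjecture as a counting statement for $K$-rational effective zero-cycles on $C$ of degree $n$ supported on a single Galois orbit, use Faltings' theorem on rational points of subvarieties of abelian varieties to restrict their location in the Jacobian, and finally compare with known asymptotic counts of degree $n$ number fields. Each $P\in C(L)_{\mathrm{new}}$ with $[L:K]=n$ gives an orbit-irreducible divisor $D_P = P + P^{\sigma_2} + \cdots + P^{\sigma_n} \in \Sym^n(C)(K)$ with residue field exactly $L$. After discarding the negligible contribution from points of degree $<n$, this assignment is injective on Galois orbits, so it suffices to show that the set of residue fields of $K$-rational orbit-irreducible divisors in $\Sym^n(C)(K)$ has density zero in $\cL$ when ordered by discriminant.

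After choosing an auxiliary $K$-rational divisor for rigidification (or working over a suitable extension and descending), consider the Abel--Jacobi map $\phi\colon \Sym^n(C)\to J_C$ with image $W_n$. When $n<g(C)$, $W_n$ is a proper subvariety of $J_C$, and Faltings' subvariety theorem (already invoked above in the Harris--Silverman result) gives $W_n(K)\subseteq \bigcup_{i=1}^k(a_i+A_i)\cap W_n$ for finitely many abelian subvarieties $A_i\subseteq J_C$ with $a_i+A_i\subseteq W_n$. For each $c\in W_n(K)$, the fiber $\phi^{-1}(c)$ is a $K$-rational linear system; by a Brill--Noether analysis this fiber is generically a single point, and positive-dimensional fibers together with positive-dimensional translates $a_i+A_i$ force $C$ to admit a morphism to a curve $C'$ of strictly lower genus (possibly $\PP^1$) with the contributing divisors appearing as fibers. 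The residue fields that arise are therefore controlled by residue fields of points on $C'$, and the aim is to show these form a thin subfamily of $\cL$ whose count is $o(X)$, to be compared with Davenport--Heilbronn for $n=3$, Bhargava for $n=4,5$, and Malle's heuristics in general.

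The regime $n\ge g(C)$ is more delicate, since $W_n=J_C$ and Faltings gives no geometric restriction on $W_n(K)$ directly; one must instead use the $\PP^{n-g}$-fiber structure of $\phi$ and bound how many effective representatives of each $K$-rational class are orbit-irreducible of size $n$. A positive-rank Chabauty argument on $\Sym^n(C)$, or height-based counting on Brill--Noether loci inside $W_n$, seems essential here to avoid the absurd conclusion that most degree $n$ fields occur.

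The main obstacle is the quantitative passage from the qualitative geometric input (Faltings or Brill--Noether structure) to discriminant bounds on the arising residue fields: even a complete description of the exceptional loci does not by itself yield the required density statement, since a single positive-dimensional family can in principle spread over many fields with small discriminant. For the specific curves $X_0(23),X_0(29),X_0(31),X_0(64)$ with $n=3$ treated in this paper, the Jacobians are small enough that one can presumably combine a Chabauty-style descent on $\Sym^3$ with an explicit treatment of the cubic fields arising, matched against Davenport--Heilbronn. Any uniform attack on the general conjecture would seem to require either an effective height bound in Faltings' theorem or a substantially finer arithmetic analysis of the exceptional translates.
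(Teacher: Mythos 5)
The statement you are attempting to prove is a \emph{conjecture} (Conjecture~\ref{conj:diostab2}), not a theorem. The paper does not prove it; it states it, and then proves only certain special cases: Theorem~\ref{thm:unit} (the unit equation for cubic fields, which is a case of Conjecture~\ref{conj:diostab} for a punctured $\PP^1$) and Theorem~\ref{thm:modular} (degree~3 points on $X_0(23)$, $X_0(29)$, $X_0(31)$, $X_0(64)$, which is the present conjecture with $K=\Q$, $n=3$, and those four modular curves). Your proposal, to its credit, acknowledges at the end that it does not close the argument and identifies the central bottleneck --- passing from a geometric description of the exceptional families to a density-zero statement on discriminants --- so you and the paper are in agreement that this is an open conjecture in general.

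Comparing your sketch with what the paper actually does for the special cases: your broad strategy (Abel--Jacobi into the Jacobian, control $W_n(K)$ via finiteness of Mordell--Weil, identify the infinite families as fibers of low-degree maps to $\PP^1$, then count discriminants) is exactly the route the paper takes for $X_0(64)$. Since $J(\Q)$ is finite torsion for these curves, every degree-3 divisor class $[D-3P_1]$ lands in a finite set; an explicit Riemann--Roch computation shows all but four classes give a unique effective representative, and the remaining four give degree-3 morphisms $\varphi\colon X_0(64)\to\PP^1$. The step you flagged as the main obstacle is handled by Theorem~\ref{thm:sparse} and Corollary~\ref{cor:sparsecubics}: write the discriminant of the fiber polynomial $F(t,Y)$ as $g(t)\,h(t)^2$ with $g$ squarefree of even degree; a Chebotarev argument applied to $\Gal(g)$ produces a positive-density set $S$ of primes at which $\ord_p(\Delta(\OO_{K_t}))$ is forced to be even; and the Bhargava--Taniguchi--Thorne count with local conditions (Corollary~\ref{cor:BTT2}), combined with a Delange Tauberian theorem (Theorem~\ref{thm:count}), shows such cubic fields have density zero. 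Your proposal does not supply any analogue of this last, decisive, step --- it invokes Davenport--Heilbronn as a comparison count but gives no mechanism for showing the exceptional families avoid $100\%$ of discriminants. Also note that the paper does not invoke Faltings' subvariety theorem here, nor any Chabauty descent on $\Sym^3$: the Jacobians in question have trivial Mordell--Weil rank, so a direct finite enumeration plus Riemann--Roch suffices. For a general curve with positive-rank Jacobian the conjecture remains entirely open, and the paper makes no claim otherwise.
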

\textbf{Remarks.}
\begin{itemize}
\item The idea of Diophantine stability first appears in a different
context due to Mazur and Rubin \cite{MazurRubin}.
\item It is tempting to conjecture that $\cX(\OO_L)=\cX(\OO_K)$
for $100\%$ of $L \in \cL$. This however is false.
For example, let $\cL$ be all quartic number fields $L/\Q$.
		Let $X/\Q$ be a hyperbolic curve such that $\cX(\Z)=\emptyset$,
		but $\cX(\Z[(1+\sqrt{5})/2]) \ne \emptyset$. A positive 
		proportion of quartic number fields $L$, when ordered by 
		absolute discriminant,
		contain $\Q(\sqrt{5})$
		(\cite[Corollary 1.2]{CDO}) and therefore $\cX(\OO_L) \ne \cX(\Z)$
		for those fields.
\end{itemize}

\medskip

We mention a few special cases of Conjecture~\ref{conj:diostab}
in the literature.
\begin{enumerate}[leftmargin=5mm]
\item[(I)] Let $n$ be a positive integer with $3 \nmid n$.
Let $\cL$ be the degree $n$ number fields $L/\Q$
where $3$ splits completely. A theorem of
Triantafillou \cite{Triantafillou} asserts that $(\PP^1 \setminus \{0,1,\infty\})(\OO_L)=\emptyset$
for $L \in \cL$. Similar theorems for many other \lq\lq locally restricted\rq\rq\
families
$\cL$ were proved  by
Freitas, Kraus and Siksek \cite{FKS, FKS2,FKS_PNAS, FKS_cyclic}.
\item[(II)] Let $\ell$ be a prime and let $\cL$ be the set of
cyclic degree $\ell$ number fields $L/\Q$.
Let $C/\Q$ be a curve and let $P \in C(\Q)$ be a rational point.
Write $J_C$ for the Jacobian of $C$ and suppose that $J_C(\Q)=0$.
Let $X=C\setminus \{P\}$. Siksek \cite{Siksek_puncture} shows that $\cX(\OO_L)=\emptyset$
for $100\%$ of $L \in \cL$, subject to a mild restriction on the
mod $\ell$ representation of $J_C$.
\end{enumerate}

\medskip

The purpose of this paper is to prove Conjecture~\ref{conj:diostab} for several
hyperbolic curves, with $\cL$ the set of all quadratic or cubic fields.

\begin{thm}\label{thm:hyp}
Let $g \in \Q[T]$ be squarefree, of even degree $d \ge 6$, and write $G=\Gal(g)$
for the Galois group of $g$. Suppose there is an element $\sigma \in G$
acting freely on the roots of $g$. 
Let $C/\Q$ be the hyperelliptic curve defined by $Y^2=g(T)$, and write $J$
for its Jacobian.
Suppose
	\begin{enumerate}[(a)]
		\item either $d \ge 10$;
		\item or $d=6$ and $J(\Q)$ is finite;
		\item or $d=8$ and $J(\Q)$ is finite or $J$ is simple.
	\end{enumerate}
Then $C(L)_{\mathrm{new}}=\emptyset$
for $100\%$ of quadratic fields $L/\Q$, when ordered by
absolute discriminant.
\end{thm}
We note that if $g$ is irreducible, then there is an element $\sigma \in G$
acting freely on its roots \cite{Jordan}. Naturally, as $C$ is hyperelliptic,
it has infinitely many quadratic points, but the above theorem is saying
that the quadratic fields over which they appear are relatively rare.

A famous theorem of Ogg \cite{Ogg} asserts that there are $19$ values of $N$ for which which $X_0(N)$ is hyperelliptic:
%Of these, the only one for $J_0(N)(\Q)$ is infinite is $N=37$. The remaining $18$ values are
\begin{itemize}
\item genus $2$: $N=22$, $23$, $26$, $28$, $29$, $31$, $37$, $50$;
\item genus $3$: $N= 30$, $33$, $35$, $39$, $40$, $41$, $48$;
\item genus $4$: $N=47$;
\item genus $5$: $N=46$, $59$;
\item genus $6$: $N=71$.
\end{itemize}
\begin{cor}\label{cor:hyp}
	Let $N \ne 37$ be any of the above values for which $X_0(N)$
	is hyperelliptic.
	Then $X_0(N)(L)_{\mathrm{new}}=\emptyset$ for 
	$100\%$ of quadratic fields $L/\Q$, when ordered
	by absolute discriminant.
\end{cor}

\medskip

\begin{thm}\label{thm:cubic}
	Let $F(T,Y) \in \Q[T,Y]$ have degree $3$ in $Y$,
	and suppose $F$ is irreducible over $\overline{\Q}$.
	Let $C/\Q$ be the smooth projective curve with plane affine model $F(T,Y)=0$.
	Suppose $\genus(C) \ge 8$.
	Let $\Delta_Y(F) \in \Q[T]$
be the discriminant of $F$ with respect to $Y$, and write this as
\[
        \Delta_Y(F) \; = \; g(T) \cdot h(T)^2
\]
where $g$, $h \in \Q[T]$ with $g$ squarefree.
	Suppose that $g$ has even degree,
	and that there is an element $\sigma \in \Gal(g)$ acting freely on
	the roots of $g$. Then $C(L)_\mathrm{new}=\emptyset$ for $100\%$ of cubic
	fields $L/\Q$, when ordered by absolute discriminant.
\end{thm}
We note that the curve $C$ in Theorem~\ref{thm:cubic} has infinitely many
cubic points.
Theorems~\ref{thm:unit} and ~\ref{thm:modular} concern
cubic points on certain hyperbolic curves,
all of which have infinitely many cubic points. 
\begin{thm}\label{thm:unit}
	$(\PP^1 \setminus \{0,1,\infty\})(\OO_L)=\emptyset$ for $100\%$
	of cubic fields $L/\Q$, when ordered by absolute discriminant.
\end{thm}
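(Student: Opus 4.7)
The plan is to classify all solutions and then show the resulting cubic fields are sparse. Since $\Z^\times = \{\pm 1\}$ admits no solution to the unit equation over $\Z$, any $\varepsilon \in \OO_L^\times$ with $1-\varepsilon \in \OO_L^\times$ in a cubic $L$ is irrational, hence generates $L/\Q$. Writing its minimal polynomial as $f_\varepsilon(x) = x^3 - s_1 x^2 + s_2 x - s_3$, the two integer constraints $s_3 = N_{L/\Q}(\varepsilon) = \pm 1$ and $f_\varepsilon(1) = N_{L/\Q}(1-\varepsilon) = \pm 1$ determine $s_2$ from $s_1$ up to a finite sign choice, yielding four one-parameter families of monic integer cubics $\{f^{(k)}_{s_1}\}_{k=1,\dots,4}$ indexed by the trace $s_1 = \Tr_{L/\Q}(\varepsilon) \in \Z$. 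A direct computation shows the polynomial discriminant is $\Delta(f^{(k)}_{s_1}) = s_1^4 + O(s_1^3)$ in each family.

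The natural $S_3$-action on solutions (via $\varepsilon \mapsto 1-\varepsilon$, $1/\varepsilon$, and so on) has generic orbit size $6$, so each bad cubic field $L$ arises as $L^{(k)}_{s_1} := \Q[x]/(f^{(k)}_{s_1})$ for at most $6$ tuples $(s_1, k)$. The count of bad cubic fields with $|\Disc(L)| \le X$ is therefore controlled, up to a constant, by the count of $(s_1,k)$ with $|\Disc(L^{(k)}_{s_1})| \le X$.

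Since $|\Disc(L^{(k)}_{s_1})| = |\Delta(f^{(k)}_{s_1})|/[\OO_L : \Z[\varepsilon]]^2$, the crux is to show that for $100\%$ of $s_1 \in \Z$ (in natural density), the index $[\OO_L : \Z[\varepsilon]]$ is $O(1)$, so that $|\Disc(L^{(k)}_{s_1})| \asymp |s_1|^4$ and the count of $(s_1, k)$ with $|\Disc(L^{(k)}_{s_1})| \le X$ is $O(X^{1/4})$. I would establish this via a Bhargava-type sieve: the condition that a prime $p$ divides the index (i.e., that Dedekind's criterion fails at $p$) imposes a sparse congruence condition on $s_1$, and the exceptional set summed over $p$ has density zero. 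Comparing with the Davenport--Heilbronn asymptotic $\#\{L : [L:\Q]=3, |\Disc(L)| \le X\} \sim cX$, the ratio of bad fields to all cubic fields is $O(X^{-3/4 + o(1)}) \to 0$, giving the theorem.

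The hard part will be carrying out the sieve rigorously: one must verify that the subset of $s_1 \in \Z$ where the index $[\OO_L : \Z[\varepsilon]]$ becomes anomalously large (so that $|\Disc(L^{(k)}_{s_1})| \ll |s_1|^4$) contributes only $o(X)$ cubic fields of discriminant $\le X$ in total. A careful $p$-adic analysis of when $f^{(k)}_{s_1}$ fails to be $p$-maximal, combined with a uniform bound on this exceptional set, should suffice; as a backup, effective Baker-type bounds on heights of unit-equation solutions would provide a more direct (if quantitatively weaker) route to the same conclusion.
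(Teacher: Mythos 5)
Your classification step is sound and matches (in spirit) the role played by Nagell's theorem in the paper: you correctly observe that an exceptional unit $\varepsilon$ in a cubic field $L$ has minimal polynomial $x^3 - s_1 x^2 + s_2 x - s_3$ constrained by $s_3 = \pm 1$ and $1 - s_1 + s_2 - s_3 = \pm 1$, giving a few one-parameter families indexed by the trace $s_1$; the paper cites Nagell's explicit form of exactly these families. The $S_3$-symmetry observation and the multiplicity bound (each field arises from $O(1)$ tuples, via Evertse) are also fine. So far, so good.

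The gap is in the counting. You reduce to showing that $\#\{(s_1,k) : |\Disc(L^{(k)}_{s_1})| \le X\}$ is $o(X)$, and you observe this would follow if $[\OO_L : \Z[\varepsilon]]$ were $O(1)$ for $100\%$ of $s_1$. But the step you label ``the hard part'' is not a technical detail to defer --- it is the entire content of the theorem, and the sketch does not give a mechanism to close it. Since $|\Delta(f^{(k)}_{s_1})| \asymp |s_1|^4$ and the field discriminant is at least an absolute constant, the index can be as large as $\approx |s_1|^2$, which means that a priori there is \emph{no bound at all} on $|s_1|$ subject to $|\Disc(L^{(k)}_{s_1})| \le X$. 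To run your argument you must prove a uniform tail estimate of the shape $\#\{s_1 : [\OO_L:\Z[\varepsilon]] \ge c|s_1|^2 X^{-1/2}\} = o(X)$, which is a statement about large deviations of the index, strictly harder than the Bhargava--Shankar--Wang ``mostly squarefree discriminant'' theorems, and not something a routine $p$-adic/Dedekind analysis will deliver. The proposed Baker/Győry backup does not rescue this: effective height bounds for unit-equation solutions give $h(\varepsilon) \ll |\Disc(L)|^{1/2+\epsilon}$, so $|s_1|$ can be \emph{exponential} in $X^{1/2}$ when $|\Disc(L)| \le X$, which is useless for a density-zero count.

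The paper avoids the tail problem entirely by not counting the exceptional fields directly. Instead, from the families $f_t$ and $g_t$, it observes that any prime dividing the field discriminant must divide the polynomial discriminant $F(t) = t^4 + 6t^3 + 7t^2 - 6t - 31$ (respectively $(t^2+3t+9)^2$), hence must lie in the Frobenian set $S$ of primes modulo which $F$ has a root, a set of Chebotarev density $3/8$. (The $g_t$ family has square polynomial discriminant, hence $C_3$ Galois group, and so is handled directly by Davenport--Heilbronn.) The punchline is then a ramification-restriction count: by the Bhargava--Taniguchi--Thorne theorem on cubic fields with local conditions (Corollary~\ref{cor:discriminant} in the paper), cubic fields whose discriminant is supported on a set of primes of density $<1$ have density zero. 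This sidesteps any need to control $[\OO_L : \Z[\varepsilon]]$, the size of $s_1$, or the tail of the index, which is precisely where your proposal stalls. If you want to salvage your route, you would need to replace the hand-waved sieve with something like the ramification argument: show the bad discriminants have an even valuation (or are unramified) at a positive-density set of primes, and then invoke a field-counting theorem with local conditions rather than trying to bound the number of $s_1$ directly.
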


\begin{thm}\label{thm:modular}
	Let $X=X_0(23)$, $X_0(29), X_0(31)$ or $X_0(64)$ (these have genera $2$, $2$, $2$, $3$ respectively).
	Then $X(L)_{\mathrm{new}}=\emptyset$ for $100\%$ of cubic fields $L/\Q$, when ordered by absolute discriminant.
\end{thm}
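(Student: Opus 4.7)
The plan is to use the Abel--Jacobi map to parametrise rational effective degree $3$ divisors on $X$ by a finite union of $\PP^1$'s, then bound the cubic fields arising from each $\PP^1$-family by combining a height count with a squarefree sieve, and finally compare against the Davenport--Heilbronn asymptotic for cubic fields.

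Let $X$ be one of the four curves and $J=\mathrm{Jac}(X)$; in each of the four cases $J(\Q)$ is finite. Fix a rational cusp and let $\iota\colon X^{(3)}\to J$ be the associated Abel--Jacobi map. Then $\iota(X^{(3)}(\Q))\subseteq J(\Q)$ is finite, so $X^{(3)}(\Q)$ decomposes into the finitely many fibres of $\iota$ above $J(\Q)$. For the three genus $2$ curves, Riemann--Roch forces $h^0(D)=2$ for every effective $D$ of degree $3$, so each non-empty fibre is a $\PP^1$, and $X^{(3)}(\Q)$ is a finite union of $\PP^1(\Q)$'s, each parametrising a pencil of degree $3$ divisors cut out by a degree $3$ map $\pi\colon X\to\PP^1$ defined over $\Q$. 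For $X_0(64)$ of genus $3$ the map $\iota$ is birational, so its positive-dimensional fibres above $J(\Q)$ correspond to the finitely many $\Q$-rational $g^1_3$'s on $X$; the remaining rational degree $3$ divisors form a finite set and contribute only finitely many cubic fields, hence density zero.

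A point of $X(L)_{\mathrm{new}}$ with $L$ cubic now corresponds to some $t\in\PP^1(\Q)$, for one of the pencils $\pi$, such that $\pi^{-1}(t)$ is a single Galois orbit; write $L=L_t$ for its residue field. Writing $t=a/b$ in lowest terms, $\pi^{-1}(t)$ is cut out by a cubic polynomial $f_{a,b}(x)\in\Z[a,b,x]$ of bounded bidegree, whose discriminant $\Disc(f_{a,b})$ is a fixed polynomial of some degree $e$ in $(a,b)$. The field discriminant $\Disc(L_t)$ differs from $\Disc(f_{a,b})$ by the square of the ring index $[\OO_{L_t}:\Z[\theta_t]]$, where $\theta_t$ is a root of $f_{a,b}$. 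The key quantitative input is that for a density-one subset of $(a,b)$ this index is bounded, giving $|\Disc(L_t)|\gg H(t)^e$ with $H(t)=\max(|a|,|b|)$; this can be extracted from a squarefree sieve on $\Disc(f_{a,b})$ of the type developed by Bhargava--Shankar--Tsimerman. One concludes that the number of distinct cubic fields $L_t$ arising with $|\Disc(L_t)|\le Y$ is $O(Y^{2/e})$. By Davenport--Heilbronn, the total number of cubic fields with absolute discriminant at most $Y$ grows like $cY$, so each pencil contributes a density-zero set of cubic fields; summing over the finite list of pencils $\pi$, together with the finite isolated contribution on $X_0(64)$, yields the theorem.

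The main obstacle is the squarefree-sieve step controlling how often $\Disc(f_{a,b})$ acquires a large square factor, thereby bounding the gap between polynomial and field discriminants; without it one cannot rule out many parameters $t$ collapsing to the same cubic field, nor exceptionally small values of $|\Disc(L_t)|$. The separate case of Galois (cyclic) cubic fields arising as some $L_t$ is immediately negligible, since the count of cyclic cubic fields with $|\Disc|\le Y$ is $O(\sqrt{Y})$ by the classical parametrisation via conductors, so the analysis above need only be carried out in the generic $S_3$ case.
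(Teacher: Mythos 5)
Your reduction to finitely many degree-$3$ pencils $\pi\colon X\to\PP^1$ via the Abel--Jacobi map and finiteness of $J(\Q)$ is exactly the paper's first step, including the treatment of the isolated fibres for $X_0(64)$. From there, however, you and the paper diverge: you attempt a height count combined with a squarefree sieve, whereas the paper uses a parity-of-valuation argument. The paper observes that $\Disc(\OO_{L_t})/G(a,b)\in(\Q^\times)^2$, where $G$ is the homogenisation of the squarefree part $g$ of $\Delta_Y(F)$; since $\Gal(g)$ contains a fixed-point-free element, Chebotarev produces a \emph{positive-density} set $S$ of primes $p$ for which $G(a,b)\not\equiv 0\pmod p$ for all coprime $(a,b)$, hence $\ord_p(\Disc(\OO_{L_t}))$ is even for every $t$. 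It then invokes the Bhargava--Taniguchi--Thorne local count for cubic fields to conclude that cubic fields whose discriminant has even $p$-adic valuation at every $p\in S$ have density $0$. This replaces any control of the ring index by a purely local condition that holds unconditionally and uniformly in $t$.

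Your route has two genuine gaps. First, the squarefree sieve for binary forms is only known unconditionally for degree $\le 6$ (Greaves), and conditionally under $abc$ beyond that; in the $X_0(64)$ case the relevant form $\Delta_Y(F)$ already has degree $8$ in $(a,b)$, so the input you cite is not available. (Bhargava--Shankar--Tsimerman treat the discriminant as a polynomial on the space of binary cubics, not as a binary form of high degree in $(a,b)$; it does not apply here.) Second, and more fundamentally, the deduction ``$|\Disc(L_t)|\gg H(t)^e$ for density one of $t$ $\Rightarrow$ $O(Y^{2/e})$ cubic fields of discriminant $\le Y$ arise'' is unjustified: the exceptional $t$ lie in a density-zero set but are not constrained in height, so arbitrarily many $t$ of large height could collapse to cubic fields of small field discriminant. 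You would need a bound of the shape ``\#$\{t : |\Disc(L_t)|\le Y\}=o(Y)$'' uniformly over \emph{all} $t$, which requires controlling the ring index at every scale simultaneously, not merely on a density-one subset of a bounded box. The paper's parity argument avoids both problems entirely, at the small cost of needing to verify that the squarefree part $g$ has even degree and that $\Gal(g)$ has a fixed-point-free element (automatic when $g$ is irreducible, by Jordan's theorem), which is checked explicitly for each pencil on each curve.
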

The four modular curves in Theorem~\ref{thm:modular}
do not satisfy the hypothesis of Theorem~\ref{thm:cubic}.
Nevertheless, the result follows from the proof of Theorem~\ref{thm:cubic}
together with further computations.

\medskip

Theorems~\ref{thm:cubic},~\ref{thm:unit} and~\ref{thm:modular} make use of results due to Bhargava, Taniguchi and
Thorne, %and to Ellenberg, Pierce and Wood, 
recalled in
Section~\ref{sec:wood}, which count (by absolute discriminant)
cubic number fields
satisfying local restrictions. For number fields of general
degree, there are no such results yet. However, in 
certain settings we are able to say something about the discriminants of the primitive degree $n$ number fields
for which there are new points. Recall that a degree $n$ number field $L/\Q$
is primitive if its Galois group is a primitive subgroup of
$S_n$; equivalently, $L/\Q$ is primitive if the only subfields
$\Q \subseteq K \subseteq L$ are $K=\Q$ and $K=L$.

\begin{thm}\label{thm:primitive}
	Let $n \ge 2$. Let $F(T,Y) \in \Q[T,Y]$ have degree $n$ in $Y$,
	and suppose $F$ is irreducible over $\overline{\Q}$.
	Let $C/\Q$ be the smooth projective curve with plane affine model $F(T,Y)=0$.
	Suppose
	\begin{equation}\label{eqn:primitive}
		\genus(C) \; > \; n^2-n+1.
	\end{equation}
	Let $\Delta_Y(F) \in \Q[T]$
be the discriminant of $F$ with respect to $Y$, and write this as
\[
        \Delta_Y(F) \; = \; g(T) \cdot h(T)^2
\]
where $g$, $h \in \Q[T]$ with $g$ squarefree.
	Suppose that $g$ has even degree,
	and that there is an element $\sigma \in \Gal(g)$ acting freely on
	the roots of $g$. 
	Let $\cL$ be the set of all degree $n$ primitive number fields
	$L/\Q$. Let
	\[
		\Disc(C,\cL) = \{ \lvert \Delta(\OO_L) \rvert \; : \; 
		\text{$L \in \cL$ and $C(L)_{\mathrm{new}} \ne \emptyset$}\}.
	\]
	Then $\Disc(C,\cL)$ has density $0$ among the natural numbers. More precisely,
	there is some $0< \alpha<1$ such that 
\[
	\# (\Disc(C,\cL) \cap [1,X]) \; = \; O\left( \frac{X}{(\log{X})^\alpha}\right)
\]
as $X \rightarrow \infty$.
\end{thm}

\medskip

The paper is organized as follows.
Section~\ref{sec:count} counts, using
a theorem of Delange, integers
where the prime factors of the squarefree part belongs to certain
special subsets of primes. 
In Section~\ref{sec:single}
we consider $F(T,Y) \in \Q[T,Y]$ of degree $n$
in $Y$ and, under certain hypotheses,
we give an upper bound for the number
of discriminants of degree $n$ number fields
arising from polynomials $F(t,Y)$ with $t \in \Q$.
We apply this in Section~\ref{sec:primitive}, together with a theorem of Vojta,
to deduce Theorem~\ref{thm:primitive} and
Theorem~\ref{thm:hyp}. In Section~\ref{sec:wood} we recount
theorems due to
Bhargava, Taniguchi and
Thorne %and to Ellenberg, Pierce and Wood, that count
that count
cubic number fields with certain restrictions
on ramified primes; these together with techniques
introduced earlier in the paper allow us
to prove Theorem~\ref{thm:cubic}. 
The proofs 
of Theorems~\ref{thm:unit}
and ~\ref{thm:modular} are given in Sections~\ref{sec:unit}
and ~\ref{sec:modular} respectively, and make use
of results proved in Section~\ref{sec:wood} together with
explicit computations. Finally, in Section~\ref{sec:future}
we suggest research projects, based on the ideas of 
this paper, that some readers might be interested to pursue.

\medskip

We are grateful to Maarten Derickx and Victor Flynn for helpful discussions.

\section{Counting integers with certain restricted factorisations}\label{sec:count}
Let $\PP$ be the set of prime numbers
and let $\cP \subseteq \PP$. Following Serre \cite{Serre},
we call $\cP$ \textbf{regular of density $\alpha>0$}
if, for $\re(s)>1$,
\begin{equation}\label{eqn:regular}
\sum_{p \in \cP} \frac{1}{p^s} \; =\; \alpha \cdot \log\left( \frac{1}{s-1} \right) \; + \; \theta_A(s)
\end{equation}
where $\theta_A$ extends to a holomorphic function on $\re(s) \ge 1$.
We call the set $\cP$ \textbf{Frobenian of density $\alpha>0$}
if there exists a finite Galois extension $L/\Q$
and a subset $\cC$ of $G=\Gal(L/\Q)$, such that
\begin{itemize}
\item $\cC$ is a union of conjugacy classes in $G$;
\item $\alpha=\# \cC/\# G$;
\item for every sufficiently large prime $p$,
we have $p \in \cP$ if and only if $\sigma_p \in \cC$
where $\sigma_p$ is a Frobenius element of $G$
corresponding to $p$.
\end{itemize}
By the Chebotarev Density Theorem \cite[Proposition 1.5]{Serre},
if $\cP$ is Frobenian of density $\alpha>0$
then it is regular of density $\alpha>0$.

\begin{thm}\label{thm:count}
	Let $S$ be a set of primes of regular density $0 < \alpha<1$.
	Let $N_S(X)$ be the number of positive integers $n \le X$
	such that $\ord_p(n)$ is even for all $p \in S$. 
	Then, there exists some constant $C_S>0$ such that
	\[
		N_S(X) \; \thicksim \; C_S \cdot \frac{X}{(\log{X})^\alpha}.
	\]
\end{thm}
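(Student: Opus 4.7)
The plan is to recognise the counting function as the summatory function of a multiplicative indicator and attack it by Selberg--Delange / Wirsing Tauberian methods. Define $f(n) = 1$ if $\ord_p(n)$ is even for every $p \in S$ and $f(n) = 0$ otherwise. Then $f$ is multiplicative, $0 \le f(n) \le 1$, with Euler factors
\[
\sum_{k \ge 0} \frac{f(p^k)}{p^{ks}} = \begin{cases} (1-p^{-s})^{-1} & p \notin S, \\ (1-p^{-2s})^{-1} & p \in S. \end{cases}
\]
Writing $F(s) = \sum_n f(n) n^{-s}$, I would factor out the Riemann zeta function by the identity $(1-p^{-2s})^{-1} = (1-p^{-s})^{-1} (1+p^{-s})^{-1}$ to obtain
\[
F(s) \; = \; \zeta(s) \cdot G(s), \qquad G(s) \; \colonequals \; \prod_{p \in S} \frac{1}{1+p^{-s}}.
\]

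Next I would analyse $G(s)$ near $s=1$. Expanding $\log(1+p^{-s}) = p^{-s} + r_p(s)$, the series $\sum_{p} r_p(s)$ converges absolutely and defines a holomorphic function on $\re(s) > 1/2$. By the regularity of $S$ with density $\alpha$,
\[
-\log G(s) \; = \; \sum_{p \in S} p^{-s} + \sum_{p \in S} r_p(s) \; = \; \alpha \log\!\left(\frac{1}{s-1}\right) + \Psi(s),
\]
with $\Psi$ holomorphic on $\re(s) \ge 1$. Hence $G(s) = (s-1)^\alpha \, e^{-\Psi(s)}$, and putting $W(s) \colonequals \bigl((s-1)\zeta(s)\bigr)^{\alpha} e^{-\Psi(s)}$ one can rewrite
\[
F(s) \; = \; \zeta(s)^{1-\alpha} \cdot W(s),
\]
where $W$ is holomorphic in a neighbourhood of the line $\re(s) = 1$ and $W(1) > 0$ (since $(s-1)\zeta(s)\vert_{s=1} = 1$ and $\Psi(1) \in \R$).

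Finally, I would invoke the Selberg--Delange theorem (as in Tenenbaum, \emph{Introduction to Analytic and Probabilistic Number Theory}, Theorem II.5.2), applied with complex exponent $\beta = 1-\alpha \in (0,1)$, to conclude
\[
N_S(X) \; = \; \sum_{n \le X} f(n) \; \sim \; \frac{W(1)}{\Gamma(1-\alpha)} \cdot \frac{X}{(\log X)^{\alpha}},
\]
giving the desired asymptotic with $C_S = W(1)/\Gamma(1-\alpha) > 0$. The main obstacle is a technical one: standard formulations of Selberg--Delange require $W$ to extend holomorphically into a zero-free region of $\zeta$ slightly to the left of $\re(s)=1$, together with polynomial growth on vertical lines. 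Serre's regularity condition, as stated, only gives holomorphy up to $\re(s) \ge 1$; so the delicate step is to either (i) verify that the error term $\Psi$ in fact continues a little beyond the line $\re(s) = 1$ (which follows from the proof of regularity in the Frobenian case via Chebotarev and the zero-free region of Hecke $L$-functions), or (ii) appeal to a suitable variant due to Wirsing that requires only continuous boundary behaviour. Either route yields the stated $\thicksim$ asymptotic.
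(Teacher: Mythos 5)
Your analytic decomposition is essentially the same as the paper's: both proofs take logarithms of the Euler product, isolate $\sum_{p \notin S} p^{-s}$ (or equivalently factor out $\zeta(s)$ and isolate $\sum_{p \in S} p^{-s}$), invoke Serre's regularity to produce the $(1-\alpha)\log\frac{1}{s-1}$ singularity, and arrive at a representation $D(s) = g_0(s)/(s-1)^{1-\alpha}$ with $g_0$ holomorphic and nonvanishing at $s=1$. Up to that point there is no meaningful difference.

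Where you diverge is in the choice of concluding theorem, and this is where your proposal has a genuine gap. You reach for the Selberg--Delange method (Tenenbaum Thm.~II.5.2), which is an analytic, contour-shifting technique: it requires holomorphic continuation of $W(s)$ into a classical zero-free region $\sigma > 1 - c/\log(|t|+2)$ together with polynomial bounds on vertical lines. Serre's notion of regularity supplies none of that; it only gives holomorphy of $\theta_A$ in a neighbourhood of the closed half-plane $\{\re(s)\ge 1\}$, with no growth control. Your two proposed workarounds do not close the gap: (i) fails in general because a regular set of primes need not be Frobenian, so there is no underlying Artin/Hecke $L$-function whose zero-free region you could import (the theorem as stated assumes only regularity); (ii) is the right instinct, but you leave it as an unexamined pointer rather than a proof.

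The paper avoids this entirely by applying the Tauberian theorem of Delange (Tenenbaum Thm.~II.7.28, not II.5.2). That result is tailored to Dirichlet series with \emph{non-negative} coefficients: positivity of the $a_n$ substitutes for the vertical-line growth hypotheses, and all that is needed is exactly the representation $D(s)=g_0(s)(s-1)^{-(1+\omega)}$ with $g_0$ holomorphic near $\{\re(s)\ge 1\}$ and $g_0(1)\ne 0$, $\omega>-1$ --- precisely what regularity delivers. This is the precise form of the ``Wirsing-type variant'' you gestured at. With that substitution your argument becomes complete; without it, the obstacle you flagged is real, and your proof does not go through as written.
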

\begin{proof}
	Let 
	\[
		a_n=\begin{cases}
			1 & \text{if $\ord_p(n)$ is even for all $p \in S$}\\
			0 & \text{otherwise}.
		\end{cases}
	\]
	Then $N_S(X)=\sum_{n \le X} a_n$. We shall make use of a Tauberian theorem
	due to Delange \cite[page 350]{Tenenbaum} to estimate $N_S(X)$. 

	Consider the Dirichlet series
	\[
		D(s)=\sum_{n=1}^\infty \frac{a_n}{n^s},
	\]
	which defines a holomorphic function on $\re(s)>1$.
	We need to analyse the behaviour of $D(s)$ in the neighbourhood
	of $s=1$.
	For $\re(s)>1$ we write $D(s)$ as an Euler product
	\[
		\begin{split}
			D(s) & =\prod_{p \in S} \left(1+ \frac{1}{p^{2s}}+\frac{1}{p^{4s}}+\cdots \right)
		\cdot \prod_{p \notin S} \left(1+ \frac{1}{p^{s}}+\frac{1}{p^{2s}}+\cdots \right) \\
			& = \prod_{p \in S} \left(1-\frac{1}{p^{2s}}\right)^{-1}
			      \cdot \prod_{p \notin S}  \left(1-\frac{1}{p^{s}}\right)^{-1}.
		\end{split}
	\]
	Thus
	\[
		\log(D(s)) \; = \; \sum_{p \notin S} \frac{1}{p^s} \; + \; \theta(s)
	\]
	where $\theta$ is holomorphic at $s=1$. Since $\mathbb{P} \setminus S$
	has regular density $1-\alpha>0$, we conclude from \eqref{eqn:regular} that
	\[
		\log(D(s)) \; = \; (1-\alpha) \cdot \log \left(\frac{1}{s-1} \right)+\phi(s)	
	\]
	where $\phi$ is holomorphic at $s=1$. Hence
	\[
		D(s) \; = \; \frac{g_0(s)}{(s-1)^{1+\omega}}
	\]
	where $g_0(s)=\exp(\phi(s))$ is holomorphic and non-vanishing at $s=1$, and $\omega=-\alpha$; 
	the notation here is chosen to match that of Theorem II.7.28 of \cite{Tenenbaum}.
	We note that $-1<\omega < 0$. Applying the aforementioned theorem gives
	\[
		\sum_{n \le X} a_n \; \thicksim \; 
		\frac{g_0(1)}{\Gamma(\omega+1)} \cdot X(\log{X})^\omega.
	\]
	This gives the theorem with $C_S=\exp(\phi(1))/\Gamma(1-\alpha)$.
\end{proof}

\section{Degree $n$ points from a single source}
\label{sec:single}

To motivate what comes next, we mention the following
\lq\lq single source theorem\rq\rq\ due to the authors
\cite[Theorem 1.1]{KhawajaSiksek2}. In this context,
by a \textbf{primitive degree $n$ point} on $C/K$, we mean
an algebraic point $P \in C(\overline{K})$
such that $K(P)/K$ is a primitive extension of degree $n$.
\begin{theorem}\label{thm:single}
Let $K$ be a number field. Let $C/K$ be a curve of genus $g$,
and write $J$ for the Jacobian of $C$.
Let $n \ge 2$ and suppose
\begin{equation}\label{eqn:glb}
        \begin{cases} g>(n-1)^2 & \text{if $n \ge 3$}\\
                g\ge 3 & \text{if $n=2$}.
        \end{cases}
\end{equation}
Suppose that $A(K)$ is finite for every abelian subvariety $A/K$
of $J$ of dimension $ \le n/2$.
%Suppose that either $J$ is simple,
%or $J(K)$ is finite.
If $C$ has infinitely many primitive points of degree $n$,
then there is a degree $n$ morphism $\varphi : C \rightarrow \PP^1$
defined over $K$
such that all but finitely many primitive degree $n$ divisors
are fibres $\varphi^{*}(t)$ with $t \in \PP^1(K)$.
\end{theorem}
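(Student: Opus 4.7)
The plan is to pass to the symmetric power $C^{(n)}$, using that each primitive degree $n$ point $P \in C(\overline{K})$ gives rise to the effective divisor $D_P = \sum_{\sigma \in \Gal(\overline{K}/K)} \sigma(P)$, a single Galois orbit of size $n$ and hence an element of $C^{(n)}(K)$. An infinite supply of primitive degree $n$ points therefore produces an infinite subset $S \subseteq C^{(n)}(K)$. Fix $D_0 \in S$ and form the Abel--Jacobi morphism $\alpha \colon C^{(n)} \to J$, $D \mapsto [D - D_0]$, with image $W_n \subseteq J$.

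By Faltings' theorem on rational points on subvarieties of abelian varieties, the Zariski closure of $\alpha(S)$ in $J$ is a finite union of translates $v_i + A_i$, where each $A_i \subseteq J$ is an abelian subvariety and $v_i + A_i \subseteq W_n$. The hypothesis on $J$ immediately disposes of the low-dimensional components: if $\dim A_i \le n/2$, then $A_i(K)$ is finite, so the coset $(v_i + A_i)(K)$ is finite as well. Hence all but finitely many elements of $S$ lie in components $v_i + A_i$ with $\dim A_i > n/2$.

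The geometric heart of the argument is then to show that, for each such large component, the pullback $\alpha^{-1}(v_i + A_i) \subseteq C^{(n)}$ is swept out by complete linear systems $|D|$ of degree $n$ and projective dimension $\ge 1$, so that $v_i + A_i$ is the image under $\alpha$ of the fibres of a $g^1_n$, equivalently of a degree $n$ morphism $\varphi_i \colon C \to \PP^1$ defined over $K$. This is an Abramovich--Harris / Debarre-type structure result on abelian subvarieties sitting inside $W_n$. Within such a component, the primitive $K$-rational divisors are then automatically of the form $\varphi_i^*(t)$ with $t \in \PP^1(K)$.

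Finally, the Castelnuovo--Severi inequality, together with the genus bound $g > (n-1)^2$ (respectively $g \ge 3$ when $n = 2$, via the uniqueness of the hyperelliptic pencil), implies that $C$ admits at most one $g^1_n$ up to $\Aut(\PP^1)$. The morphisms $\varphi_i$ produced above therefore all coincide with a single $\varphi \colon C \to \PP^1$ over $K$, and all but finitely many primitive degree $n$ $K$-rational divisors on $C$ are fibres $\varphi^*(t)$ with $t \in \PP^1(K)$. \emph{The main obstacle} is the third step: establishing the structure theorem that translates of abelian subvarieties of dimension $> n/2$ inside $W_n$ must arise from a degree $n$ pencil, and checking that primitivity of divisors is preserved under this identification; the Mordell--Weil type hypothesis and the Castelnuovo--Severi bound play only auxiliary roles around this geometric core.
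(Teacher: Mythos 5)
This statement is not proved in the paper you were given: it is quoted as \cite[Theorem~1.1]{KhawajaSiksek2}, a result of the authors in a companion paper, and appears here only as motivation for Theorem~\ref{thm:sparse}. So there is no in-text proof to compare against. I can only assess the proposal on its own terms.

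Your overall blueprint (push the primitive divisors into $C^{(n)}(K)$, apply Faltings/Mordell--Lang to $\alpha(\cD)\subset W_n\subset J$, dispose of the components $v_i+A_i$ with $\dim A_i\le n/2$ via the Mordell--Weil hypothesis, and finish with Castelnuovo--Severi to get uniqueness of the $g^1_n$) is the standard template for this kind of result, and the first, second, and last steps are essentially correct. However, the central claim is wrong as stated. You assert that for a component $v_i+A_i$ with $\dim A_i > n/2$, the preimage $\alpha^{-1}(v_i+A_i)\subset C^{(n)}$ is ``swept out by complete linear systems $|D|$ of degree $n$ and projective dimension $\ge 1$''. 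Under the genus bound one has $g\ge n$, so the Abel--Jacobi map $\alpha\colon C^{(n)}\to W_n$ is birational and its fibres over a generic point are single points; there is no reason for the fibres over a positive-dimensional coset to be positive-dimensional linear systems, and in general they are not. Consequently, the assertion that such a component ``is the image under $\alpha$ of the fibres of a $g^1_n$'' does not hold, and it is not an Abramovich--Harris/Debarre--Fahlaoui theorem; what Debarre and Fahlaoui actually prove is a dimension bound on abelian subvarieties whose translates sit inside $W^r_d$, not that such translates parametrise pencils.

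The mechanism that actually produces the pencil should be the opposite of what you wrote: one needs to show (using the genus bound, primitivity, and the geometry of $W_n$ — this is where the real work is) that components with $\dim A_i>0$ cannot carry infinitely many of the primitive classes, so that the closure of $\alpha(\cD)$ is in fact a finite set of points. Then infinitely many of the primitive divisors $D$ lie in a single zero-dimensional fibre of $\alpha$, i.e.\ are linearly equivalent to a fixed $D_0$, and the positive-dimensional linear system $|D_0|$ is what yields the degree-$n$ pencil $\varphi$ (after cutting down to a $g^1_n$ and checking, via primitivity, that the degree cannot drop and that the system has dimension exactly $1$). You have correctly flagged that the hard geometric input is missing, but the gap is not merely a missing ``structure theorem'' of the form you describe; the intermediate claim you built the proof around is the part that fails, so the argument would need to be reorganised, not just supplemented.
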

This theorem is called the \lq\lq Single Source Theorem\rq\rq\
since, with finitely many exceptions, all primitive degree $n$ points come
from a single source which is the morphism $\varphi : C \rightarrow \PP^1$.
A variant of this theorem is due to Derickx \cite{Derickx_Contract}.

In view of our Conjecture~\ref{conj:diostab2}, it makes sense
to consider the family of degree $n$ number fields we obtain
from the fibres of this morphism, and to attempt to show
that this family contains $0\%$ of degree $n$ number fields.
We shall give a result in this spirit.  

\medskip

Let $C$ be a smooth projective absolutely irreducible curve over $\Q$,
and let $\varphi : C \rightarrow \PP^1$ be a morphism
of degree $n \ge 2$.
Then the curve $C$ 
possesses a plane affine (possibly singular) model
\[
	C \; : \; F(T,Y)=0
\]
where $F(T,Y) \in \Q[T,Y]$ has degree $n$ in $Y$, is irreducible over $\overline{\Q}$,
and where the morphism $\varphi$ is given by the map $(T,Y) \mapsto T$. 
Let $\Theta \subset \Q$ be the set of $t \in \Q$
such that $F(t,Y)$ is either reducible, or has degree $<n$.
Hilbert's irreducibility theorem asserts that $\Theta$
is contained in a thin set.
Thus $C$ has infinitely many degree $n$ points. For $t \in \Q \setminus \Theta$
we write $L_t$ for the degree $n$ number field defined by $F(t,Y)=0$,
and let $\Delta(\OO_{L_t})$ be the discriminant of the ring of integers
$\OO_{L_t}$ of $L_t$.  We define the \textbf{discriminant set} of $F$
to be
\[
	\Disc_F \; = \; \{ \lvert \Delta(\OO_{L_t})\rvert \; : \; t \in \Q \setminus \Theta\}.
\]
We shall give a mild sufficient condition for the set $\Disc_F$ to
have natural density $0$ among the positive integers.
Whilst this is a long way off from Conjecture~\ref{conj:diostab2},
it does harmonise with it.
\begin{thm}\label{thm:sparse}
Let $\Delta_Y(F) \in \Q(T)$
be the discriminant of $F$ with respect to $Y$, and write this as
\[
	\Delta_Y(F) \; = \; g(T) \cdot h(T)^2
\]
where $g$, $h \in \Q[T]$, with $g$ squarefree.  
Suppose $g$ has even degree. Let $G=\Gal(g)$. Suppose that there is an element $\sigma \in \Gal(g)$
acting freely on the roots of $g$. Then there is some $0<\alpha<1$ such that, as $X \rightarrow \infty$, 
\[
	\# (\Disc_F \cap [1,X]) \; = \; O\left( \frac{X}{(\log{X})^\alpha}\right).
\]
In particular, the discriminant set $\Disc_F$ has density $0$ among the natural numbers.
\end{thm}

\begin{lem}\label{lem:sparse}
	Suppose $F$ satisfies the hypotheses
	of Theorem~\ref{thm:sparse}.
	Then there is a set $S$ of primes
having positive regular density, such that for all $t \in \Q \setminus \Theta$
and all $p \in S$, the valuation $\ord_p(\Delta(\OO_{L_t}))$ is even.
\end{lem}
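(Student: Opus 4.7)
The plan is to show that for an appropriate infinite set $S$ of primes, $\ord_p(\Delta(\OO_{K_t}))$ has the same parity as $\ord_p(g(t))$ for every $t \in \Q \setminus \Theta$, and then to exploit both the even-degree assumption on $g$ and the Chebotarev-style assumption on $\Gal(g)$ to guarantee that $\ord_p(g(t))$ is itself always even.

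First I would handle the algebraic input. After clearing denominators we may assume $F \in \Z[T,Y]$ is primitive with leading coefficient $f_n(T) \in \Z[T]$ in $Y$. For $t \in \Q \setminus \Theta$, the polynomial $F(t,Y) \in \Q[Y]$ has discriminant $\Delta_Y(F)\bigl|_{T=t} = g(t) h(t)^2$, and replacing $F(t,Y)$ by a monic integral model (and then by a generator of $\OO_{K_t}$) only alters the discriminant by factors that are even powers: the scaling factor is a power $f_n(t)^{(n-1)(n-2)}$ with even exponent, $h(t)^2$ is a square, and the index $[\OO_{K_t}:\Z[\alpha]]^2$ is a square. Hence $\ord_p(\Delta(\OO_{K_t})) \equiv \ord_p(g(t)) \pmod 2$ for every prime $p$ outside a fixed finite set of ``bad'' primes (those dividing the denominators of the coefficients of $F$, the leading coefficients of $f_n$ and of $g$, etc.).

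Next I would produce $S$ by a Chebotarev argument. Let $M/\Q$ be the splitting field of $g$ and identify $G = \Gal(g) = \Gal(M/\Q)$ with its action on the roots of $g$. The set of derangements (elements of $G$ with no fixed point on the roots of $g$) is a union of conjugacy classes and, by hypothesis, is nonempty; thus it has positive density inside $G$. Define $S$ to consist of all primes $p$, unramified in $M$ and outside the finite bad set above, whose Frobenius is a derangement. By Chebotarev, $S$ is Frobenian of positive density, so in particular regular of positive density.

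The final step is a direct case analysis. Fix $p \in S$ and write $t = a/b$ with $\gcd(a,b)=1$; let $d = \deg g$ and let $G(X,Y) = Y^{d} g(X/Y) = c_d X^d + \cdots + c_0 Y^d$ be the homogenisation, so $g(t) = G(a,b)/b^{d}$. If $p \nmid b$, then $a/b \in \F_p$ is well-defined and the derangement condition says that $g \bmod p$ has no root in $\F_p$, whence $\ord_p(G(a,b)) = 0$ and $\ord_p(g(t)) = 0$, which is even. If $p \mid b$, then $p \nmid a$ and $G(a,b) \equiv c_d a^d \pmod p$; since $p$ avoids $c_d$ we get $\ord_p(G(a,b)) = 0$, so $\ord_p(g(t)) = -d\cdot \ord_p(b)$, which is even precisely because $d$ is even. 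In both cases $\ord_p(g(t))$ is even, hence so is $\ord_p(\Delta(\OO_{K_t}))$.

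The main obstacle is the clean bookkeeping in the first step (reducing to monic integral models so that ``the discriminant'' is well-defined up to squares), together with ensuring that the parity manipulation works uniformly in $t$; the second potential pitfall is the case $p \mid b$ in the final step, where the entire argument collapses without the even-degree hypothesis on $g$.
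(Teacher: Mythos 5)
Your proof is correct and follows essentially the same route as the paper: reduce $\ord_p(\Delta(\OO_{K_t}))$ modulo $2$ to $\ord_p(G(a,b))$ for the homogenisation $G$ of $g$ (using the even degree of $g$ to absorb $b^{\deg g}$), then apply Chebotarev to the derangement classes of $\Gal(g)$ to produce $S$. The paper phrases the conclusion as the homogeneous congruence $G(u,v)\equiv 0\pmod p$ having no coprime solutions for $p\in S$, which bundles together your $p\nmid b$ and $p\mid b$ cases; your version is somewhat more explicit about the finitely many bad primes (dividing leading coefficients and denominators) that must be excluded from $S$.
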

\begin{proof}
Let $t \in \Q \setminus \Theta$, and write $t=a/b$ where $a$, $b$ are coprime integers.
We note that 
	\[
		\Delta(\OO_{L_t})/\Delta_Y(F(t,Y)) \; \in \; (\Q^\times)^2.
	\]
Thus
	\[
		\Delta(\OO_{L_t})/g(t) \; \in \; (\Q^\times)^2.
	\]
As $g$ has even degree, say $m$, we can write
\[
	g(t) \; = \; \frac{G(a,b)}{b^m}
\]
	where $G$ is homogeneous, and so
	\[
		\Delta(\OO_{L_t})/G(a,b) \; \in \; (\Q^\times)^2.
	\]
Thus, for any prime $p$,
	\[
		\ord_p(\Delta(\OO_{L_t})) \; \equiv \; \ord_p(G(a,b)) \pmod{2}.
	\]
By assumption, there is some $\sigma \in \Gal(g)$ acting freely on the roots of $g$. 
Therefore, by the Chebotarev Density Theorem, there is a set of primes $S$, having positive
	regular density, such that the congruence $G(u,v) \equiv 0 \pmod{p}$
	does not have solutions with coprime $u$, $v$. Hence for $p \in S$,
	we have $\ord_p(G(a,b))=0$, so $\ord_p(\Delta(\OO_{L_t}))$ is even.
\end{proof}

\begin{proof}[Proof of Theorem~\ref{thm:sparse}]
Theorem~\ref{thm:sparse} follows immediately from 
	Theorem~\ref{thm:count} and Lemma~\ref{lem:sparse}.
\end{proof}

\section{Proofs of Theorems~\ref{thm:hyp} and~\ref{thm:primitive}}\label{sec:primitive}

We shall need the following theorem of Vojta \cite[Corollary 0.3]{Vojta}.
\begin{theorem}[Vojta]
	Let $C$ be a curve defined over a number field $K$. Let $n$ be a positive
	integer and let $\varphi : C \rightarrow \PP^1$ be a non-constant morphism
	defined over $K$. Suppose
	\begin{equation}\label{eqn:Vojta}
		\genus(C)-1 \; > \; (n-1) \cdot \deg(\varphi).
	\end{equation}
	Then the set
	\[
		\{ P \in C(\overline{K}) \; : \; \text{$[K(P):K] \le n$ and $K(\varphi(P))=K(P)$} \} 
	\]
	is finite.
\end{theorem}

\begin{proof}[Proof of Theorem~\ref{thm:primitive}]
Let $C/\Q$ and $F(T,Y) \in \Q[T,Y]$ and $\cL$ be as in 
the statement of Theorem~\ref{thm:primitive}. In particular
$F$ has degree $n$ in $Y$. Let $\varphi : C \rightarrow \PP^1$
be defined on the affine patch $F(T,Y)=0$ by $(T,Y) \mapsto T$;
this has degree $n$. Let $L \in \cL$. Thus $L$
is a primitive degree $n$ number field. Let 
$P \in C(L)_{\mathrm{new}}$. Then $\Q(P)=L$.
Moreover, $\Q(\varphi(P)) \subseteq L$,
and therefore, as $L$ is primitive, $\Q(\varphi(P))=\Q$ or $L$.
We note that assumption~\eqref{eqn:primitive}
in Theorem~\ref{thm:primitive} implies \eqref{eqn:Vojta}.
It follows from Vojta's theorem that for all but
finitely many $L \in \cL$, any $(T,X) \in C(L)_{\mathrm{new}}$
satisfies $T \in \Q$. Thus, with at most finitely many
exceptions, the elements of $\Disc(C,\cL)$
belongs to $\Disc_F$. Theorem~\ref{thm:primitive}
follows immediately from Theorem~\ref{thm:sparse}.
\end{proof}

\begin{proof}[Proof of Theorem~\ref{thm:hyp}]
	Let $F(T,Y)=Y^2-g(T)$; the discriminant
	with respect to $Y$ is $\Delta_Y(F)=2^2 g(T)$.
	Let $d=\deg(g)$.
	Suppose first that $d \ge 10$.
	 Thus $C$ has genus $\ge 4$,
	and so \eqref{eqn:primitive} is satisfied.
	Theorem~\ref{thm:hyp} follows immediately
	from Theorem~\ref{thm:primitive}.

	We now consider $d=6$ and $d=8$ (and thus $C$ has genus $2$
	or $3$). We suppose that
assumptions (b), (c) in the statement of Theorem~\ref{thm:hyp} hold. 
	We invoke \cite[Proposition 18]{KhawajaSiksek},
	which is a standard 
	consequence of the proof by Faltings \cite{Faltings_Lang}
	of the Bombieri-Lang conjecture for subvarieties of abelian varieties.
	We conclude the existence of finitely many effective degree $2$
	divisors $D_1,\dots,D_m$ such that
	\[
		C^{(2)}(\Q)=\bigcup_{i=1}^m \lvert D_i \rvert
	\]
	where $\lvert D \rvert$ denotes the complete linear series
	corresponding to $D$. An easy application of the
	Riemann-Roch theorem and Clifford's theorem
	\cite[Theorem IV.5.4]{Hartshorne}
	shows that $\lvert D \rvert=\{D\}$ unless $D$ is a hyperelliptic
	divisor, 
	in which case 
	$\lvert D \rvert = \{ \varphi^*(t) : t \in \PP^1(\Q)\}$.
	Thus
	\[
		\{ P \in C(\overline{\Q}) \; : \; \text{$[\Q(P):\Q]=2$ and $\Q(\varphi(P))=\Q(P)$} \} 
	\]
	is finite. The theorem now follows from the proof of Theorem~\ref{thm:primitive}. 
\end{proof}

\begin{proof}[Proof of Corollary~\ref{cor:hyp}]
It is known, e.g. \cite{BruinNajman}, that $J_0(N)(\Q)$
	is finite for all $N$ such that $X_0(N)$ is hyperellitic,
	except for $N=37$. The hyperelliptic polynomials
	$g$ for these hyperelliptic modular curves are
	available in  the computer algebra package \texttt{Magma} \cite{Magma}. 
	We computed the Galois
	groups and checked that the hypotheses of
	Theorem~\ref{thm:hyp} apply.
\end{proof}
\noindent \textbf{Remark.} A recent paper of Najman and Trbovi\'c \cite{NajmanT} studies
primes that ramify in the field of definition of 
non-exceptional non-cuspidal quadratic points on hyperelliptic
modular curves $X_0(N)$ with $N \ne 37$. It is possible
to deduce Corollary~\ref{cor:hyp} from their results, together
with Theorem~\ref{thm:count}.

\section{Proof of Theorem~\ref{thm:cubic}}\label{sec:wood}
%In this section we recount some well-known theorems which count number fields of degrees $2$, $3$, $4$, $5$,
%with certain local restrictions. We shall not need the most general form of these theorems,
%and content ourselves with stating simplified special cases.
\begin{comment}
The first concerns quadratic fields, and is a special case of \cite[Proposition A.1]{EPMW}.
\begin{thm}[Ellenberg, Pierce and Wood]
Let $T$ be a finite set of primes. Write $N_T^{\pm}(X)$ for the
number of isomorphism classes of quadratic fields $K$ satisfying
\begin{itemize}
\item $0 < \pm \Delta_K \le X$;
\item all $p \in T$ are unramified in $K$.
\end{itemize} 
Then
\[
N_T^{\pm}(X) \; = \; C_{T} \cdot X + O_T(X^{1/2})  
\]
where $C=1/2 \zeta(2)$ and
\[
C_{T} \; =\;  C \cdot \prod_{p \in T} \left( 1 - \frac{1}{p+1}\right).
\]
\end{thm}
\end{comment}
%
%Most importantly for us,
We shall need a theorem of 
Bhargava, Taniguchi and Thorne \cite[Theorem 1.3]{BTT}
that counts cubic fields, ordered by discriminant,
with local specifications. The following
two corollaries are easy consequences of that theorem.
\begin{cor}\label{cor:BTT1}
Let $T$ be a finite set of primes. Write $N_T(X)$ for the
number of isomorphism classes of cubic fields $L$ satisfying
	\begin{enumerate}[(i)]
\item $\lvert \Delta_L \rvert  \le X$;
\item $\ord_p(\Delta_L)=0$ for all $p \in T$.
\end{enumerate} 
Then
\[
N_T(X) \; = \; C_{T} \cdot X + O_T(X^{5/6})  
\]
where 
\[
	C_{T} \; =\;  \frac{1}{3 \zeta(3)} \cdot \prod_{p \in T} \left( 1 - \frac{p+1}{p^2+p+1}\right).
\]
\end{cor}
\begin{cor}\label{cor:BTT2}
Let $T$ be a finite set of primes, with $2$, $3 \notin T$. Write $N_T(X)$ for the
number of isomorphism classes of cubic fields $L$ satisfying
	\begin{enumerate}[(i)]
\item $\lvert \Delta_L \rvert  \le X$;
\item $\ord_p(\Delta_L)$ is even for all $p \in T$.
\end{enumerate} 
Then
\[
N_T(X) \; = \; C_{T} \cdot X + O_T(X^{5/6})  
\]
where
\[
	C_{T} \; =\;  \frac{1}{3 \zeta(3)}\cdot \prod_{p \in T} \left( 1 - \frac{p}{p^2+p+1}\right).
\]
\end{cor}
\begin{proof}[Proof of Corollaries~\ref{cor:BTT1} and~\ref{cor:BTT2}]
	Theorem 1.3 of \cite{BTT} expresses the density $C_T$ as a product
	of local densities, which depend on the splitting type of $p$ in $L$ (either totally split, partially split,
	inert, partially ramified, totally ramified). 
	In Corollary~\ref{cor:BTT1} the allowed splitting types for $p \in T$ are totally split, partially split,
	inert, and the corollary follows immediately from the aforementioned \cite[Theorem 1.3]{BTT}.

	We turn to the proof of Corollary~\ref{cor:BTT2}. For this we note that for $p \ne 2$, $3$, the valuation $\ord_p(\Delta_L)$ is either $0$, $1$ or $2$ according
to whether $p$ is unramified in $L$, partially ramified in $L$, or totally ramified in $L$; this follows 
	immediately from Lemma~\ref{lem:ramification} below.
	Thus, condition (ii) of Corollary~\ref{cor:BTT2} is equivalent to saying that for every $p \in T$ the splitting type
	of $p$ is either totally split, partially split, inert or totally ramified.
	The result again follows from \cite[Theorem 1.3]{BTT}.
\end{proof}

The following result is well-known. We include the proof for the convenience of the reader.
\begin{lem}\label{lem:ramification}
Let $L$ be a number field of degree $n$. Let $p>n$ be a rational prime
and write
	\[
		p\OO_L=\fP_1^{e_1}\cdots \fP_r^{e_r}
	\]
	where the $\fP_i$ are distinct prime ideals. Let $f_i$
	be the inertial degree of $\fP_i$. Then
	\[
		\ord_p(\Delta_L)=\sum_{i=1}^r f_i (e_i-1).
	\]
\end{lem}
\begin{proof}
The condition $p>n$ ensures that $p \nmid e_i$,
so the primes $\fP_i$ are tamely ramified in $L$.
	Write $\mathcal{D}_L$ for the different ideal of $L$. Then
	$\ord_{\fP_i}(\mathcal{D}_L)=e_i-1$
	(Theorem III.2.6 of \cite{Neukirch}).
	The lemma follows as $\Norm(\mathcal{D}_L)=\lvert \Delta_L \rvert$
	(Theorem III.2.9 of \cite{Neukirch}).
\end{proof}

\begin{comment}
The following is a special case of \cite[Theorem 4.1]{EPMW}.
\begin{thm}[Ellenberg, Pierce and Wood] 
	Let $T$ be a finite set of primes. Write $\tilde{N}_T(X)$ for the
number of isomorphism classes of non-$D_4$ quartic fields $K$ satisfying
\begin{itemize}
\item $0 < \lvert \Delta_K \rvert \le X$;
\item all $p \in T$ are unramified in $K$.
\end{itemize}
Then
\[
	\tilde{N}_T(X) \; = \; C_{T} \cdot X + O_{T,\epsilon}(X^{23/24+\epsilon})  
\]
where
\[
C_{T} \; =\;  C \cdot \prod_{p \in T} \left( 1 - \frac{p^2+2p+1}{p^3+p^2+2p+1}\right).
\]
and $C>0$ is some fixed absolute constant.
\end{thm}
Cohen, Diaz y Diaz and Olivier \cite{CDO} show that the number of $D_4$-extensions with
absolute discriminant at most $X$ is $\tilde c \cdot X$ where $c \approx 0.052326$,
but there does not seem to be a count for $D_4$-extension with local restrictions.
\end{comment}

\begin{comment}
Finally, the following is a special case of \cite[Theorem 5.1]{EPMW}.
\begin{thm}[Ellenberg, Pierce and Wood] \label{thm:EPW}
Let $T$ be a finite set of primes. Write $N_T(X)$ for the
number of isomorphism classes of quintic fields $K$ satisfying
\begin{itemize}
\item $0 < \lvert \Delta_K \rvert \le X$;
\item all $p \in T$ are unramified in $K$.
\end{itemize} 
Then
\[
	N_T(X) \; = \; C_{T} \cdot X + O_{T,\epsilon}(X^{199/200+\epsilon})  
\]
where
\[
C_{T} \; =\;  C \cdot \prod_{p \in T} \left( 1 - \frac{p^3+2p^2+2p+1}{p^4+p^3+2p^2+2p+1}\right).
\]
and $C>0$ is some fixed absolute constant.
\end{thm}
\end{comment}

\begin{lem}\label{lem:prod}
	Let $R=\{p_1,p_2,p_3,\dotsc\}$ be a set of primes having regular density $>0$.
	Let $f$, $g$ be monic polynomials with coefficients in $\R$, with $\deg(g)=\deg(f)+1$,
	and suppose $g(p_i) \ne 0$ for all $i$.
	Then 
	\[
		\lim_{k \rightarrow \infty} \prod_{i=1}^k \left(1 - \frac{f(p_i)}{g(p_i)}\right) =0.
	\]
\end{lem}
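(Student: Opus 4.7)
The plan is to take logarithms and reduce the claim to the divergence of $\sum_{p \in R} 1/p$, which will be extracted directly from the defining identity \eqref{eqn:regular} of regular density. Throughout I write $a_i = f(p_i)/g(p_i)$.

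First I would exploit that $f$ and $g$ are monic with $\deg(g) = \deg(f)+1$ to obtain the asymptotic
\[
\frac{f(p)}{g(p)} \; = \; \frac{1}{p} \; + \; O\!\left(\frac{1}{p^{2}}\right) \qquad \text{as } p \to \infty.
\]
Consequently there exists some $i_0$ such that for all $i \ge i_0$ we have $0 < a_i < 1/2$, so each factor $1 - a_i$ lies in $(1/2,1)$ and its logarithm is well defined. The finitely many factors with $i < i_0$ contribute a fixed finite value; if any of them vanishes, the full product is $0$ and we are done, so assume this is not the case. It then suffices to prove that the tail $\prod_{i \ge i_0}(1-a_i)$ tends to $0$.

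Next I would take logarithms and apply the standard expansion $\log(1-x) = -x + O(x^2)$, valid for $|x| \le 1/2$:
\[
\sum_{i \ge i_0} \log(1 - a_i) \; = \; -\sum_{i \ge i_0} a_i \; + \; O\!\left( \sum_{i \ge i_0} a_i^2\right).
\]
Since $a_i = 1/p_i + O(1/p_i^2)$, the error term is absolutely bounded (using convergence of $\sum 1/p^2$ over all primes), and the main term equals $-\sum_{i \ge i_0} 1/p_i$ up to a bounded correction. Hence the tail product tends to $0$ if and only if $\sum_{p \in R} 1/p = \infty$.

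Finally, to establish the divergence of $\sum_{p\in R} 1/p$ I would invoke the defining identity \eqref{eqn:regular}: as $s \to 1^+$ along the real axis, the right-hand side $\alpha \log(1/(s-1)) + \theta_A(s)$ tends to $+\infty$, while by the monotone convergence theorem the left-hand side tends to $\sum_{p \in R} 1/p$. Therefore the series diverges, the tail product tends to zero, and the lemma follows. The argument is essentially routine; the only delicate point is handling the small primes at the start of the product, where $1 - a_i$ could be non-positive or very small, but these form a finite set and are absorbed into a harmless constant prefactor (or force the product to vanish outright).
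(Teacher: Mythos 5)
Your proof is correct and essentially follows the same route as the paper's: reduce the vanishing of the infinite product to the divergence of $\sum_{p\in R}1/p$, and deduce that divergence from the regular-density identity \eqref{eqn:regular} as $s\to 1^+$. The only differences are cosmetic — you make the reduction explicit via $\log(1-x)=-x+O(x^2)$ and the asymptotic $f(p)/g(p)=1/p+O(1/p^2)$, and you phrase the final contradiction using monotone convergence, whereas the paper compares $1/p\le B\, f(p)/g(p)$ directly and then shows the putative convergence of $\sum 1/p$ would force the density limit $\lim_{\sigma\to1^+}\bigl(\sum_{p\in R}p^{-\sigma}\bigr)/\bigl(-\log(\sigma-1)\bigr)$ to be $0$ rather than $\delta>0$. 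Your handling of the finitely many small primes (absorbing them into a constant prefactor, or vanishing outright) is a sensible precaution and does no harm.
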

\begin{proof}
	It is sufficient to show that $\sum_{p \in R} f(p)/g(p)$ diverges. Suppose $\sum_{p \in R} f(p)/g(p)$ converges.
	We note that $xf(x)$ and $g(x)$ are monic polynomials of the same degree, therefore $g(p)/pf(p)$ is bounded, say by $B>0$,
	on $R$.  We note that $1/p  \le B \cdot f(p)/g(p)$ for $p \in R$. Thus $\sum_{p \in R} 1/p$ converges.
	Let $\delta$ be the regular density of $R$, which by assumption is positive.
	From \eqref{eqn:regular} we have
	\[
		\delta = \lim_{\sigma \rightarrow 1^+}\frac{\sum_{p \in R} \frac{1}{p^\sigma}}{-\log(\sigma-1)}.
	\]
	However, as $\sigma \rightarrow 1^+$,
	\[
		\frac{\sum_{p \in R} \frac{1}{p^\sigma}}{-\log(\sigma-1)}
	       \; \le \;
	       \frac{\sum_{p \in R} \frac{1}{p}}{-\log(\sigma-1)} \rightarrow 0.
        \]
	This contradicts
	the assumption that $\delta>0$.
\end{proof}

\begin{cor}\label{cor:discriminant}
	Let $S$ be a set of primes having regular density $<1$. Then the proportion
	of cubic number fields with discriminants supported only on $S$
	is $0\%$, when ordered by absolute discriminant.
	%\begin{enumerate}[(a)]
	%	\item Let $d=2$, $3$ or $5$. Then the proportion of degree $d$ number fields 
	%		with discriminants supported only on $S$ 
	%		is $0\%$, 
	%		when ordered by absolute discriminant.
	%	\item Let $d=4$. Then the proportion of non-$D_4$ quartic fields
	%		with discriminants supported only on $S$ is $0\%$,
	%		when ordered by absolute discriminant.
	%\end{enumerate}
\end{cor}
\begin{proof}
	Write $\cF(X)$ for the set of cubic fields with absolute discriminant
	at most $X$, and write $\cF_S(X)$ for the subset whose discriminants
	are supported on $S$. We want to show that
	\begin{equation}\label{eqn:limF}
		\lim_{X \rightarrow \infty} \frac{\# \cF_S(X)}{\# \cF(X)} =0.
	\end{equation}
	Let $p_1,\dotsc,p_k$ be primes belonging to $R:=\mathbb{P} \setminus S$. Then
	\[
		\cF_S(X) \; \subset \; \{ K \; : \; \text{$K \in \cF(X)$,
		$p_1,\dotsc,p_k$ unramified in $K$}\}.
	\]
	Hence
	\[
		0 \le \frac{\# \cF_S(X)}{\# \cF(X)} \le \frac{ N_{p_1,\dotsc,p_k}(X)}{N_{\emptyset}(X)}
		\rightarrow \prod_{i=1}^k \left( 1 - \frac{p_i+1}{p_i^2+p_i+1}\right)
	\]
	as $X \rightarrow \infty$,
	where the limit follows from Corollary~\ref{cor:BTT1}. 
	From Lemma~\ref{lem:prod}, by choosing $p_1,\dotsc,p_k$ be a suitable 
	(finite) subset of $R$, we can ensure that the product on the right hand-side is as small 
	as we like. This completes the proof.
\end{proof}
\begin{cor}\label{cor:discriminant2}
	Let $S$ be a set of primes having regular density $>0$. 
		Then the proportion of cubic fields
			with discriminants having even valuation at all $p \in S$
			is $0$.
\end{cor}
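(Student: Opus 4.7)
The plan is to adapt the proof of Corollary~\ref{cor:discriminant}(a) for $d=3$, replacing the input Theorem~\ref{thm:EPW} with Corollary~\ref{cor:BTT2}. I would write $\cF(X)$ for the set of cubic fields $K/\Q$ with $\lvert \Delta_K\rvert \le X$, and $\cF_S(X) \subseteq \cF(X)$ for the subset whose discriminants have even valuation at every prime in $S$. The goal is then to show $\#\cF_S(X)/\#\cF(X) \to 0$ as $X \to \infty$.

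Because Corollary~\ref{cor:BTT2} requires $3 \notin T$, I would first replace $S$ by $S' := S \setminus \{3\}$; removing a single prime does not disturb the regular density, so $S'$ still has regular density $>0$. For any finite subset $T = \{p_1,\dotsc,p_k\} \subseteq S'$ we have the containment
\[
	\cF_S(X) \; \subseteq \; \{K \in \cF(X) \; : \; \ord_{p_i}(\Delta_K) \text{ is even for } i=1,\dotsc,k\},
\]
and applying Corollary~\ref{cor:BTT1} with $T = \emptyset$ (to count all cubic fields) together with Corollary~\ref{cor:BTT2} (to bound the right-hand side) gives
\[
	0 \; \le \; \frac{\#\cF_S(X)}{\#\cF(X)} \; \le \; \frac{N_T(X)}{N_{\emptyset}(X)} \; \longrightarrow \; \prod_{i=1}^{k}\left(1 - \frac{p_i}{p_i^2+p_i+1}\right)
\]
as $X \rightarrow \infty$.

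The remaining step is to make the product on the right arbitrarily small by a suitable choice of $T$. This is precisely what Lemma~\ref{lem:prod} delivers, applied with $R = S'$, $f(x) = x$, and $g(x) = x^2+x+1$: these are monic with $\deg g = \deg f + 1$, and $g$ does not vanish on primes, so $\prod_{i=1}^{k}(1 - p_i/(p_i^2+p_i+1)) \to 0$ as $k \to \infty$. Combining, $\#\cF_S(X)/\#\cF(X) \to 0$, which is the desired statement. There is no serious obstacle; the only subtlety is the bookkeeping point that $3$ must be excluded from $T$ in order to invoke Corollary~\ref{cor:BTT2}, which is handled by passing from $S$ to $S'$ at the outset.
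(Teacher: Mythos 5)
Your proof is correct and follows essentially the same route as the paper's: the same sandwiching of $\#\cF_S(X)/\#\cF(X)$ via Corollary~\ref{cor:BTT2}, followed by Lemma~\ref{lem:prod} to force the limiting product to $0$. In fact you are slightly more careful than the paper, which does not explicitly address the hypothesis $3\notin T$ of Corollary~\ref{cor:BTT2}; your replacement of $S$ by $S\setminus\{3\}$ tidies that up cleanly.
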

\begin{proof}
	Write $\cF(X)$ for the set of cubic fields with absolute discriminant
        at most $X$, and write $\cF_S(X)$ for the subset whose discriminants have even valuation
	at all $p \in S$.
	Again we want to prove \eqref{eqn:limF}. 
        Let $p_1,\dotsc,p_k$ be primes belonging to $S$. Then
        \[
                \cF_S(X) \; \subset \; \{ K \; : \; \text{$K \in \cF(X)$,
		where $\ord_{p_i}(\Delta_K)$ is even for all $i=1,\dotsc,k$}\}.
        \]
        Hence
        \[
                0 \le \frac{\# \cF_S(X)}{\# \cF(X)} \le \frac{ N_{p_1,\dotsc,p_k}(X)}{N_{\emptyset}(X)}
                \rightarrow \prod_{i=1}^k \left( 1 - \frac{p_i}{p_i^2+p_i+1}\right)
        \]
        as $X \rightarrow \infty$,
	where the limit follows from Corollary~\ref{cor:BTT2}.
        From Lemma~\ref{lem:prod}, by choosing $p_1,\dotsc,p_k$ be a suitable
        subset of $S$, we can ensure that the product on the right hand-side is as small
        as we like. This completes the proof.
\end{proof}

\begin{cor}\label{cor:sparsecubics}
Let $F(T,Y) \in \Q[T,Y]$ be irreducible and have degree $3$ in $Y$. 
	Let $\Theta$ be the set of $t \in \Q$ such that $F(t,Y)$ 
	either has degree $<3$ or is reducible.
Let $\Delta_Y(F) \in \Q[T]$
be the discriminant of $F$ with respect to $Y$, and write this as
\[
        \Delta_Y(F) \; = \; g(T) \cdot h(T)^2
\]
where $g$, $h \in \Q[T]$ with $g$ squarefree.
Suppose $g$ has even degree. Let $G=\Gal(g)$. Suppose that there is an element $\sigma \in \Gal(g)$
acting freely on the roots of $g$. Let $\cF^\prime$ be the set of cubic fields
we obtain from $F(t,Y)=0$ with $t \in \Q \setminus \Theta$.
Then $100\%$ of cubic fields, ordered by discriminant,
do not belong to the family $\cF^\prime$.
\end{cor}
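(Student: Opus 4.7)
The plan is to combine Lemma~\ref{lem:sparse} with Corollary~\ref{cor:discriminant2}; no new technology is needed, and the statement should fall out almost immediately.

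First, I would verify that the hypotheses imposed on $F$ in the corollary are exactly those of Theorem~\ref{thm:sparse}, so that Lemma~\ref{lem:sparse} applies verbatim. This produces a set $S$ of primes of positive regular density such that, for every $t \in \Q \setminus \Theta$ and every $p \in S$, the valuation $\ord_p(\Delta(\OO_{K_t}))$ is even. By the very definition of $\cF^\prime$, every $K \in \cF^\prime$ is isomorphic to $K_t$ for some $t \in \Q \setminus \Theta$, and hence every field in $\cF^\prime$ has even $p$-adic valuation of its discriminant at every $p \in S$.

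Second, I would invoke Corollary~\ref{cor:discriminant2}: since $S$ has positive regular density, the set of cubic fields whose discriminants have even $p$-adic valuation at every $p \in S$ has density $0\%$ when ordered by absolute discriminant. Since $\cF^\prime$ lies inside this set, $100\%$ of cubic fields (ordered by discriminant) do not belong to $\cF^\prime$. A minor bookkeeping point is that Corollary~\ref{cor:BTT2}, which underlies Corollary~\ref{cor:discriminant2}, assumes $3 \notin T$; this is harmless, because Lemma~\ref{lem:prod} allows us to draw the test primes from $S \setminus \{3\}$, a set of the same positive regular density as $S$.

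The substantive work has already been done elsewhere in the paper: the Dirichlet series plus Delange's Tauberian theorem in the proof of Theorem~\ref{thm:count}, the Bhargava--Taniguchi--Thorne count of cubic fields with local conditions in Corollary~\ref{cor:BTT2}, and the divergence argument in Lemma~\ref{lem:prod}. Modulo these inputs, the deduction of Corollary~\ref{cor:sparsecubics} is essentially an exercise in matching hypotheses, and I do not foresee any genuine obstacle; the hardest step was really Corollary~\ref{cor:discriminant2} itself, which has already been proved.
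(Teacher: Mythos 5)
Your proposal is correct and matches the paper's own proof exactly: the paper also deduces the corollary by applying Lemma~\ref{lem:sparse} to obtain the set $S$ of positive regular density at which $\ord_p(\Delta(\OO_{K_t}))$ is always even, and then invokes Corollary~\ref{cor:discriminant2}. Your remark about discarding $3$ from the test primes is a sensible clarification that the paper leaves implicit.
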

\begin{proof}
	By Lemma~\ref{lem:sparse},
	there is a set $S$ of primes
having positive regular density, such that for all $t \in \Q \setminus \Theta$
and all $p \in S$, the valuation $\ord_p(\Delta(\OO_{L_t}))$ is even, for any
	$L_t \in \cF^\prime$. The desired result follows from Corollary~\ref{cor:discriminant2}.
\end{proof}

\begin{proof}[Proof of Theorem~\ref{thm:cubic}]
	We apply the proof of Theorem~\ref{thm:primitive} with $n=3$. We deduce that
	all but finitely many cubic points on $C$ arise as fibres
	of the map $C \rightarrow \PP^1$ given on the affine model
	$F(T,Y)=0$ by $(T,Y) \mapsto T$; the assumption $\genus(C) \ge 8$
	ensures that $C$ satisfies condition \eqref{eqn:primitive}.
	Theorem~\ref{thm:cubic} follows immediately from
	Corollary~\ref{cor:sparsecubics}.
\end{proof}

\section{Proof of Theorem~\ref{thm:unit}}\label{sec:unit}
Let $L$ be a number field. Nagell \cite{Nagell}
calls a unit $\varepsilon \in
\OO_L^\times$ \textbf{exceptional} if $1-\varepsilon \in \OO_L^\times$.
The number field $L$ is called \textbf{exceptional} if it possesses
an exceptional unit. Thus $\varepsilon$ is exceptional
if and only if $(\varepsilon,1-\varepsilon)$ is a solution to the
unit equation \eqref{eqn:unit}, and $L$ is exceptional if and only if the unit
equation has solutions in $L$. 
Nagell \cite{Nagell1928},
\cite{Nagell2}, has determined all exceptional number fields
where the unit rank is $0$ or $1$. Nagell finds
\cite[Section 2]{Nagell2}
that the only exceptional quadratic fields are $\Q(\sqrt{5})$
and $\Q(\sqrt{-3})$ which contain
exceptional units $(1+\sqrt{5})/2$ and $(1+\sqrt{-3})/2$ respectively,
and the only exceptional complex cubic
fields are the ones with discriminants $-23$ and $-31$.
He also showed \cite[Sections 3--5]{Nagell2}
that the only exceptional real cubic fields 
are of the form $\Q(\varepsilon)$ where $\varepsilon$ is a root of
\[
f_t(X)=X^3+(t-1)X^2-tX-1, \qquad t \in \Z, \quad t \ge 3
\]
or of
\[
g_t(X)=X^3+tX^2-(t+3)X+1, \qquad t \in \Z, \quad t \ge -1,
\]
and in both cases $\varepsilon$ is an exceptional unit.
Whilst the number of exceptional cubic fields is infinite,
we are able to show the following.
\begin{thm}
	\label{thm:exceptionalcubic}
	$100\%$ of cubic number fields are non-exceptional, 
	when ordered by absolute 
	discriminant.
\end{thm}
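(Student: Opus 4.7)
The plan is to combine Nagell's classification stated just above with Corollary~\ref{cor:sparsecubics}. That classification partitions the exceptional cubic fields into three sources: (a) the finitely many complex cubic fields of discriminants $-23$ and $-31$, (b) $\Q[X]/(f_t(X))$ for $t \in \Z_{\ge 3}$, and (c) $\Q[X]/(g_t(X))$ for $t \in \Z_{\ge -1}$. Source (a) is finite and so contributes $0\%$. Since integer $t$ is a subset of rational $t$, it suffices to show that the $\Q$-parameterised families produced by $f_t$ and $g_t$ each account for $0\%$ of cubic fields, ordered by absolute discriminant.

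For the $f_t$-family I would set $F(T,Y) = Y^3 + (T-1)Y^2 - TY - 1$ and apply Corollary~\ref{cor:sparsecubics}. First, $F$ is irreducible over $\Q(T)$: specialising to $T=3$ gives $Y^3 + 2Y^2 - 3Y - 1$, which has no rational root and is therefore irreducible over $\Q$. A direct discriminant computation gives
\[
	\Delta_Y(F) \; = \; T^4 + 6T^3 + 7T^2 - 6T - 31.
\]
I would then verify, via the rational root test together with a short case analysis of the four integer factorisations $bd = -31$ coming from a hypothetical $(T^2 + aT + b)(T^2 + cT + d)$, that this quartic is irreducible over $\Q$, hence squarefree of even degree $4$; in the notation of the corollary, $g = \Delta_Y(F)$ and $h=1$. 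Since $g$ is irreducible, $\Gal(g/\Q)$ is a transitive subgroup of $S_4$, and each of the five transitive subgroups $S_4, A_4, D_4, C_4, V_4$ contains an element without fixed points on the four roots — a $4$-cycle or a double transposition — so the Chebotarev hypothesis of Corollary~\ref{cor:sparsecubics} is satisfied. The corollary then gives density $0$ for this family.

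For the $g_t$-family, the analogous computation gives $\Delta_Y(G) = (T^2 + 3T + 9)^2$, a perfect square. Hence every $K_t := \Q[X]/(g_t(X))$ is a cyclic cubic field and $\Delta(K_t)$ is a perfect square, so $\ord_p(\Delta(K_t))$ is even for \emph{every} prime $p$. Corollary~\ref{cor:discriminant2}, applied with $S$ any set of primes of positive regular density, then forces this family to have density $0$ among cubic fields. Combined with sources (a) and (b), this proves the theorem. The only mild obstacle I foresee is that the $g_t$-case is a degenerate instance of Corollary~\ref{cor:sparsecubics} (its squarefree part is a unit of degree $0$); routing it instead through Corollary~\ref{cor:discriminant2}, via the observation that cyclic cubic fields have square discriminants, is the cleanest workaround.
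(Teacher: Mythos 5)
Your proof is correct and follows the same Nagell-based decomposition, but routes both parametric families through different corollaries than the paper does. For the $g_t$ family, you note that $\Delta_Y(g_t)=(T^2+3T+9)^2$ is a perfect square, so $K_t$ is cyclic cubic with square discriminant, and then invoke Corollary~\ref{cor:discriminant2} with $S$ any positive-density set; the paper instead observes directly that a cyclic cubic field has Galois group $C_3$ and cites Cohn and Davenport--Heilbronn (that $100\%$ of cubic fields are $S_3$). Both are sound; the paper's choice is arguably the more standard citation, though yours stays entirely within the Bhargava--Taniguchi--Thorne machinery developed in the paper. For the $f_t$ family, you invoke Corollary~\ref{cor:sparsecubics} (after checking irreducibility of the quartic discriminant, even degree, and the fixed-point-free element — which is automatic for any transitive subgroup of $S_4$ by Jordan's theorem); the paper instead argues directly that any prime dividing $\Delta(\OO_{K_t})$ divides $F(t)=t^4+6t^3+7t^2-6t-31$ and hence lies in the density-$3/8$ Frobenian set of primes where $F$ has a root, then applies Corollary~\ref{cor:discriminant} (discriminants supported on a set of density $<1$). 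These are two sides of the same coin — your $S$ is essentially the complement of the paper's — but your route is more systematic (it re-uses the general Corollary~\ref{cor:sparsecubics} that the paper develops for the modular-curve proofs), while the paper's ad hoc argument is a bit more transparent about why the restriction arises. Your flagged concern about the degenerate squarefree part for $g_t$ is real, and your workaround via Corollary~\ref{cor:discriminant2} is exactly the right one.
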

This theorem is of course equivalent to Theorem~\ref{thm:unit}.

\begin{proof}
	By the aforementioned work of Nagell~\cite{Nagell2}, it remains to prove 
	that $100\%$ of \emph{real} cubic fields are non-exceptional, when ordered by absolute discriminant. 
	We will first consider the case that $\varepsilon$ is an 
	exceptional cubic unit and a root of the polynomial
	\begin{equation*}
        g_t(X)=X^3+tX^2-(t+3)X+1
	\end{equation*}
	for some $t\in\Z_{\geq-1}$.
	The discriminant of $g_{t}(X)$ is  
	$(t^2 + 3t + 9)^2$, and therefore $\Q(\varepsilon)$ 
	has Galois group $C_3$.
	This completes the proof in this case as 
	Cohn and Davenport--Heilbronn have shown that 
	$100\%$ of cubic fields have Galois group $S_{3}$, 
	when ordered by absolute discriminant \cite{Cohn, DavenportHeilbronn}. 
	Now suppose $\varepsilon$ is an exceptional cubic unit and a root of the polynomial
	\begin{equation*}
		f_{t}(X)=X^3 + (t - 1)X^2 - tX - 1
	\end{equation*}
	for some $t\in\Z_{\geq 3}$.
	The discriminant of $f_{t}(X)$, with respect to $X$, is given by 
	\[
	F(t)\colonequals \Delta_{X}(f_{t})=t^4 + 6t^3 + 7t^2 - 6t - 31.
	\] 
	It follows that if a prime $p$ divides the discriminant of $\OO_{\Q(\varepsilon)}$
	then it divides $F(t)$ and hence the polynomial $F$
	has a root modulo $p$. Write $S$ for the set of primes $p$
	such that $F$ has a root modulo $p$. 
	The Galois group of $F$ is $D_4$ and an easy
	application of the Chebotarev Density Theorem shows that $S$ has Frobenian (and hence regular) density $3/8$.
	
	Write $\cF_S$ for the set of cubic fields with discriminants supported on $S$.
	By the above, the real exceptional fields with Galois group $S_3$ all belong to $\cF_S$.
	The latter has density $0$ by Corollary~\ref{cor:discriminant}. This completes
	the proof.
\end{proof}

\section{Proof of Theorem~\ref{thm:modular}}\label{sec:modular}
We will give the details for $X_0(64)$, which has genus $3$. The other cases are similar.
The paper of Ozman and Siksek \cite{OzmanSiksek} gives the following convenient
model for $X_0(64)$ as a plane quartic
\[
	X_0(64) \; : \; x^3 z + 4 x z^3 - y^4 \; = \; 0.
\]
We point in passing that this is isomorphic to the Fermat quartic, and 
that 
Bremner and Choudhry \cite[Theorem 3.1]{Bremner20}
have shown that all cubic points on this curve have Galois
group $S_{3}$. We shall parametrize the cubic points.
We note the following four rational points on the above model:
\[
P_1=[0:0:1],\; P_2=[2:2:1], \; P_3:=[2:-2:1], \; P_4:=[1:0:0].
\]
Let 
\[
D_1 =-P_1+3P_2-3 P_3+P_4  ,\;  D_2=-P_1+2 P_2-2 P_3+P_4 ,\; D_3=2 P_1-2 P_2+P_3-P_4 .
\]
Using the Magma programs accompanying the paper \cite{OzmanSiksek} we find that
\[
	J(\Q)=(\Z/2\Z) \cdot [D_1]+(\Z/4\Z) \cdot [D_2] + (\Z/4\Z) \cdot [D_3].
\]
Now let $Q$ be a degree $3$ point on $X_0(64)$, and let $D$ be the corresponding
irreducible effective degree $3$ divisor. Then $D-3P_1$ is a degree $0$ divisor,
and so $[D-3P_1] \in J(\Q)$. Thus,
\[
	D \sim \underbrace{3P_1+a D_1+b D_2+c D_3}_{D_{a,b,c}}, \qquad 0 \le a \le 1, \quad -1 \le b, c \le 2.
\]
Using Magma \cite{Magma} we computed the Riemann--Roch space $L(D_{a,b,c})$ for the $32$ possible combinations $(a,b,c)$.
For all but $4$ of these, the Riemann--Roch dimension is $1$, and therefore there is precisely one effective $D$ linearly equivalent to $D_{a,b,c}$.
For the remaining $4$, namely $(a,b,c)=(0,0,0)$, $(0,1,0)$, $(0,1,1)$, $(1,2,1)$, 
the Riemann--Roch dimension is $2$, and therefore there are infinitely many effective
degree $3$ divisors linearly equivalent to $D_{a,b,c}$. Consider the case $(a,b,c)=(0,0,0)$.
Then $D_{(0,0,0)}=3P_1$, and a 
basis for $L(D_{(0,0,0)})$ is given by $1$, $\varphi \colonequals y/x$,
%$\varphi=(x^2z+4z^3)/y^3$, 
where these
are considered as elements of the function field $\Q(X_0(64))$. Thus 
\[
	D=\divv(\varphi-t)+3P_1
\]
for some $t \in \Q$. We can also consider $\varphi$ as a degree $3$ morphism $X_0(64) \rightarrow \PP^1$,
and then $D=\varphi^{-1}(t)$. This gives an infinite family of effective degree $3$ divisors on $X_0(64)$. More concretely,
write $T=y/z$, and $Y=x/z$ and let
\[
	F(T,Y)=4Y^3+Y-T^4;
	%F(T,Y)=Y^3-T^3 Y^2 -4 T;
\]
then $F(T,Y)=0$ is an affine plane model for $X_0(64)$ where the morphism 
$\varphi$ is simply given by $(T,Y) \mapsto T$.
%More precisely, associated to the morphism $\varphi$ is a function field extension
%$\Q(X_0(64))/\Q(T)$, where we take $\Q(T)=\Q(\PP^1)$. By the primitive element theorem,
%we can write $\Q(X_0(64))=\Q(T)(\eta)$ where $\eta$ is the root of some cubic polynomial
%over $\Q(T)$. We find that this cubic polynomial is
%This gives an affine plane model for $X_0(64)$ where the map $\varphi$
%is simply given by $(T,Y) \mapsto T$. 
In the notation of Corollary~\ref{cor:sparsecubics}, we find
\[
	\Delta_Y(F)= -432 T^8 - 16 = g(T) \cdot h(T)^2, \qquad g(T)=-27 T^8 - 1, \quad h(T)=4.
	%-16 T^{10} - 432 T^2 = g(T) \cdot h(T)^2, \qquad g(T)=-T^8 - 27, \quad h(T)=4T.
\]
As $g$ is irreducible, the Galois group will contain elements that act freely on the roots by a theorem of Jordan~\cite{Jordan}.
More precisely, the Galois group is $\Gal(g)=D_8 \rtimes C_2$ which has order $32$, and $21$ of the elements act freely on the roots.
By Corollary~\ref{cor:sparsecubics}, $100\%$ of cubic fields will not
arise from equations of the form $F(t,Y)=0$ with $t \in \Q$.
The other three cases $(a,b,c)=(0,1,0)$, $(0,1,1)$, $(1,2,1)$
are similar. 
We refer the reader to 
\[
	\text{\url{https://github.com/MaleehaKhawaja/NewPoints}} 
\]
for the Magma code supporting the proof of this theorem.

\section{Suggested Future Projects}\label{sec:future}
We invite the reader to help gather further evidence towards our Conjecture~\ref{conj:diostab}.
Here are some projects that we believe are \lq low-hanging fruit\rq:
\begin{enumerate}[(I)]
	\item Theorem~\ref{thm:primitive} gives an upper bound for the number of
	discriminants of degree $n$ primitive number fields coming from
	degree $n$ new points on a curve $C$. It assumed that $C$
	has a plane model of the form $F(T,Y)=0$ where $F$ has degree $n$
		in $Y$. This assumption is equivalent to the existence
		of a degree $n$ morphism $\psi : C \rightarrow \PP^1$.
	We expect that it should be possible to prove a similar theorem
		where there is a degree $n$ morphism $\psi : C \rightarrow E$ to an elliptic curve $E$.
		Here, instead of Vojta's 
		Theorem (recounted in Section~\ref{sec:primitive}),
		it would be useful to apply a more general theorem
		due to Song and Tucker \cite[Proposition 2.3]{SongTucker}.
	\item The proof of Theorem~\ref{thm:cubic} makes use of a theorem
		of Bhargava, Taniguchi and Thorne \cite[Theorem 1.3]{BTT}
that counts cubic fields, ordered by discriminant,
with local specifications. Similar theorems for degrees $4$ and $5$
		have been established by Ellenberg, Pierce and Wood \cite[Theorems 4.1 and 5.1]{EPW}.
		As far as we can tell, to prove the analogue of Theorem~\ref{thm:cubic}
		for degrees $4$ and $5$ would require a refinement of the aforementioned
		theorems of Ellenberg, Pierce and Wood, that consider densities
		of different possible ramification types for primes.
	\item Corollary~\ref{cor:hyp} and Theorem~\ref{thm:modular} establish
		Conjecture~\ref{conj:diostab2} in degrees $2$, $3$
		for certain $X_0(N)$. It should be possible
		to establish many similar results for other modular curves
		appearing in the $\mathrm{L}$-Functions and Modular Forms database \cite{lmfdb:modular}.
\end{enumerate}

\bibliographystyle{abbrv}
\bibliography{Primitive}
\end{document}